\documentclass[12pt,a4paper]{amsart}

\usepackage[utf8]{inputenc}
\usepackage[T1]{fontenc}
\usepackage[english]{babel}
\usepackage{cite}

\usepackage{indentfirst}
\usepackage{amssymb}
\usepackage{amsfonts}
\usepackage{amsmath}
\usepackage{amsthm}
\usepackage[top=2.5cm, bottom=2.5cm, left=2.5cm, right=2.5cm]{geometry}
\usepackage{amsopn}
\usepackage[colorlinks]{hyperref}
\usepackage{float}
\usepackage{graphicx}
\usepackage{color}
\usepackage{xspace,colortbl}
\usepackage{multicol}
\usepackage{multirow}

\newtheorem{theorem}{Theorem}[section]
\newtheorem{corollary}[theorem]{Corollary}

%% A numbered theorem with a fancy name:

%% Numbered objects of "non-theorem" style (text roman):

\theoremstyle{definition}

\newtheorem{remark}[theorem]{Remark}

%% An unnumbered object:

\DeclareSymbolFont{bbsymbol}{U}{bbold}{m}{n}
\DeclareMathSymbol{\ind}{\mathbin}{bbsymbol}{'061}
%%%%%%%%%%%%%%%%%%%%%%%%%%%%%%%%%%%%%%%%%%%%%%%%%%%%%%%%%%%%%%%%%%%%%%%%%%%%%%%%%%%%%%%%%%%%%%%%%%%%%%

\title[Nonparametric estimators of~inequality curves and inequality measures]{Nonparametric estimators of~inequality curves and inequality measures}
\author[A{.} Jokiel-Rokita]{Alicja Jokiel-Rokita}
\author[S{.} Pi\c{a}tek]{Sylwester Pi\c{a}tek}

\keywords{inequality index, nonparametric estimation, quantile function}
\subjclass[2020]{Primary 62G05; Secondary 62P20}

\address[A.J.-R. and S.P.]{Faculty of Pure and Applied Mathematics\\ Wroc{\l}aw University
	of Science and Technology\\
	Wybrze\.ze Wyspia\'nskiego 27,
	50-370 Wroc{\l}aw, Poland
}
\email{alicja.jokiel-rokita@pwr.edu.pl}

\email{sylwester.piatek@pwr.edu.pl}

\pagestyle{headings}
\begin{document}

\begin{abstract}
Classical inequality curves and inequality measures are defined for distributions with finite mean value.
Moreover, their empirical counterparts are not resistant to outliers.
For these reasons, quantile versions of known inequality curves such as the Lorenz, Bonferroni, Zenga and $D$ curves,
and quantile versions of inequality measures such as the Gini, Bonferroni, Zenga and $D$ indices have been proposed in the literature.
We propose various nonparametric estimators of quantile versions of inequality curves and inequality measures, prove their consistency,
and compare their accuracy in a~simulation study.
We also give examples of the use of quantile versions of inequality measures in real data analysis.
\end{abstract}

\maketitle
% \tableofcontents

%%%%%%%%%%%%%%%%%%%%%%%%%%%%%%%%%%%%%%%%%%%%%%%%%%%%%%%%%%%%%%%%%%%%%%%%%%%%%
\section{Introduction}\label{sec:intro}
%%%%%%%%%%%%%%%%%%%%%%%%%%%%%%%%%%%%%%%%%%%%%%%%%%%%%%%%%%%%%%%%%%%%%%%%%%%%%

Classical inequality curves such as the Lorenz \cite{Lorenz1905}, Bonferroni~\cite{Bonferroni1930} and Zenga \cite{Zenga1984}, \cite{Zenga2007} curves,
as well as a~new inequality curve -- the $D$ curve, introduced in~\cite{Davydov2020}, are defined for distributions with a~finite mean value.
Therefore, the inequality measures related to the above-mentioned inequality curves also apply when the expected value of an observable random variable is finite.
In practice, we sometimes need to work with data from heavy-tailed distributions.
In such cases there is a~huge risk that the outliers might derange the analysis or that the expected value of the distribution may be infinite.
Such distributions appear in many situations where inequality measures (mostly the Gini index) are used, for example, in the analysis of income and wealth distribution \cite{Taleb2017} or financial returns and foreign exchange rates \cite{Ibragimov2017}.
%The Gini Index is broadly used in life sciences (see for example \cite{Eliazar2012} and \cite{Eliazar2017}).
For these reasons, quantile versions of known inequality curves such as the Lorenz, Bonferroni, Zenga and $D$ curves,
and quantile versions of inequality measures such as the Gini, Bonferroni, Zenga and $D$ indices have been proposed in the literature (see \cite{Prendergast2016} or \cite{Prendergast2018}).

The paper is organised as follows.
In Section~\ref{sec:curves} we recall the inequality curves, including the Lorenz, Bonferroni, Zenga, and $D$ curves.
The measures of inequality based on them are also given.
In Section~\ref{sec:QvIC} we describe quantile versions of the above-mentioned curves and indices.
Section \ref{sec:Estimation} is devoted to the nonparametric estimation of the curves and indices described in Section \ref{sec:QvIC}.
We propose plug-in estimators based on various estimators of the quantile function and prove their consistency.
In Section \ref{sec:simulation} we present the results of the simulation study which was conducted to compare the accuracy of the estimators of the inequality measures considered.
To illustrate applications of the quantile versions of the proposed inequality measures, in Section \ref{sec:real} two examples of real data analysis are provided.
Concluding remarks and prospect are given in Section \ref{sec:concluding}.

%%%%%%%%%%%%%%%%%%%%%%%%%%%%%%%%%%%%%%%%%%%%%%%%%%%%%%%%%%%%%%%%%%%%%%%%%%%%%%%%%%%
\section{Inequality curves and inequality measures}\label{sec:curves}
%%%%%%%%%%%%%%%%%%%%%%%%%%%%%%%%%%%%%%%%%%%%%%%%%%%%%%%%%%%%%%%%%%%%%%%%%%%%%%%%%%%

Let $X$ be a~non-negative-valued random variable with cumulative distribution function (cdf) $F(t)=P(X\leq t).$
Denote
\begin{equation*}
Q(p)=F^{-1}(p)=\inf\{t\in\mathbb{R}:F(t)\geq p\}
\end{equation*}
the quantile function (qf).

If $\mu=E(X)<\infty,$ then the Lorenz curve corresponding to the cdf $F$ and the qf $Q$ is defined by
\begin{equation}\label{e:Lcurve}
L(p;Q)=\frac{1}{\mu}\int_{0}^{p}Q(u)du, \ \ p\in(0,1).
\end{equation}
Formula (\ref{e:Lcurve}) is the Gastwirth \cite{Gastwirth1971} definition of the curve originally introduced by Lorenz \cite{Lorenz1905}.
For each $p\in(0,1),$ the value of $L(p;Q)$ expresses the share of income possessed by the $p$ percent of the poorer part of the population.
In the case of the Lorenz curve, the line of perfect equality is given by $y=x.$

The Bonferroni curve \cite{Bonferroni1930} can be defined by its relationship to the Lorenz curve. Namely,
\begin{equation}\label{e:Bcurve}
B(p;Q)=\frac{L(p;Q)}{p}, \ \ p\in(0,1).
\end{equation}
For each $p\in(0,1),$ the value of $B(p;Q)$ expresses the ratio of the mean income in the group of $p$ percent poorer part of the population
to the mean income in the population.
In the case of the Bonferroni curve, the line of perfect equality is given by $y=1.$

The Lorenz curve and the Bonferroni curve determine the parent distribution up to the scale factor.
In 1984 Zenga \cite{Zenga1984} introduced the inequality curve, called in the literature the Zenga-84 curve, which is problematic.
It is scale invariant, but it is possible that different distributions can have the same curve (see \cite{Arnold2015}).
In 2007 Zenga \cite{Zenga2007} introduced an alternative inequality curve defined by
\begin{equation}\label{e:Zcurve}
Z(p;Q)=1-\left[\frac{L(p;Q)}{p}\right] \left[\frac{1-p}{1-L(p;Q)}\right], \ \ p\in(0,1).
\end{equation}
In the literature, the Zenga inequality curve, given by (\ref{e:Zcurve}), is called the Zenga-07 curve.
Like the Lorenz and Bonferroni curves, it determines the parent distribution up to the scale factor.
In the case of the Zenga-07 curve, the line of perfect equality is given by $y=0.$
Denote
\begin{equation*}\label{e:Mcurve}
M(p;Q)=1-L(1-p;Q), \ \ p\in(0,1).
\end{equation*}
The $M(p;Q)$ curve provides the share of income owned by the richer $p$ percent of the population.
Using the notation of $M(p;Q)$, the Zenga-07 curve can be written by
\begin{equation*}\label{e:zc2}
Z(p;Q)=\frac{\frac{M(1-p;Q)}{1-p}-\frac{L(p;Q)}{p}}{\frac{M(1-p;Q)}{1-p}}, \ \ p\in(0,1).
\end{equation*}
For each $p\in(0,1),$ the value $Z(p;Q)$ is one minus the ratio of the mean income of the $p$ percent of the poorest to
the mean income of the remaining $1-p$ percent of the richest in a~population.
Motivated by the observed shifts towards extreme values in income distributions, Davydov and Greselin \cite{Davydov2020}
proposed a~new inequality curve given by
\begin{equation}\label{e:Dcurve}
D(p;Q)=1-\left[\frac{L(p;Q)}{p}\right]\left[\frac{p}{M(p;Q)}\right]=1-\frac{L(p;Q)}{M(p;Q)}, \ \ p\in(0,1).
\end{equation}
For each $p\in(0,1),$ the value $D(p;Q)$ is one minus the ratio of the mean income of the $p$ percent of the poorest to
the mean income of the $p$ percent of the richest in a~population.
In the case of the $D$ curve, the line of perfect equality is given by $y=0.$

Summary inequality measures have been defined for each of these inequality curves.
The classical measure of inequality associated with the Lorenz curve is called the Gini index and it is defined by
\begin{equation}\label{e:G}
GI(Q)=2\int_{0}^{1}[p-L(p;Q)]dp.
\end{equation}
Several alternative expressions for $GI(Q)$ are given in the literature (see e.g. \cite{Arnold1987}).
The Bonferroni $BI(Q)$, Zenga-07 $ZI(Q)$, and the $DI(Q)$ indices are defined similarly.
Namely,
\begin{equation}\label{e:BI}
BI(Q)=1-\int_{0}^{1}B(p;Q)\,dp,
\end{equation}
\begin{equation}\label{e:ZI}
ZI(Q)=\int_{0}^{1}Z(p;Q)\,dp,
\end{equation}
\begin{equation}\label{e:DI}
DI(Q)=\int_{0}^{1}D(p;Q)\,dp.
\end{equation}

Two families of inequality indices are considered in \cite{Eliazar2020}.
A $J$ curve, where $J(p;Q)=1-D(p;Q)$, was investigated together with its modification $J_m$ and an index $AJ_m$ by Gastwirth \cite{Gastwirth2016}.
They are based on the Lorenz curve, and we will not consider here their quantile versions.
Many other measures based on the Lorenz curve were discussed in the literature, see for example \cite{Sordo2013}, \cite{Gastwirth2014}, or \cite{Mukhopadhyay2021}.
Recently, a~generalised income inequality index was introduced in \cite{Dong2023}.
It is worth noting that the $ZI$ and $DI$ indices are not special cases of this index.

%%%%%%%%%%%%%%%%%%%%%%%%%%%%%%%%%%%%%%%%%%%%%%%%%%%%%%%%%%%%%%%%%%%%%%%%%%%%%%%
\section{Quantile version of inequality curves and inequality measures}\label{sec:QvIC}
%%%%%%%%%%%%%%%%%%%%%%%%%%%%%%%%%%%%%%%%%%%%%%%%%%%%%%%%%%%%%%%%%%%%%%%%%%%%%%%

Prendergast and Staudte \cite{Prendergast2016} introduced three quantile versions of the Lorenz curve.
Namely,
\begin{equation*}\label{e:L1}
L_{1}(p;Q)=p\frac{Q(p/2)}{Q(1/2)},
\end{equation*}
\begin{equation*}\label{e:L2}
L_{2}(p;Q)=p\frac{Q(p/2)}{Q(1-p/2)}
\end{equation*}
and
\begin{equation*}\label{e:L3}
L_{3}(p;Q)=p\frac{2Q(p/2)}{Q(p/2)+Q(1-p/2)}.
\end{equation*}
They also defined the following quantile versions of inequality measures
\begin{equation*}\label{e:Gi}
G_{i}(Q)=2\int_{0}^{1}[p-L_{i}(p;Q)]dp,
\end{equation*}
$i=1,2,3,$ analogously to the Gini index, given by (\ref{e:G}).

Taking into account the relation (\ref{e:Bcurve}) between the Bonferroni curve and the Lorenz curve, the
relation (\ref{e:Zcurve}) between the Zenga-07 curve and the Lorenz curve,
or the relation (\ref{e:Dcurve}) between the $D$ curve and the Lorenz curve,
we can also consider three quantile versions for each of them,
and three new quantile versions of $BI,$ $ZI$ and $DI$.
However, on the basis of interpretation of the curves $B(p;Q),$ $Z(p;Q)$ and $D(p;Q)$ for each $p$,
we define the following quantile versions
\begin{equation}\label{e:qB}
qB(p;Q)=\frac{Q(p/2)}{Q(1/2)}=\frac{L_{1}}{p},
\end{equation}
\begin{equation}\label{e:qZ}
qZ(p;Q)=1-\frac{Q(p/2)}{Q((1+p)/2)}
\end{equation}
and
\begin{equation}\label{e:qD}
qD(p;Q)=1-\frac{Q(p/2)}{Q(1-p/2)}=1-\frac{L_{2}}{p}
\end{equation}
of the Bonferroni, Zenga-07 and $D$ curve, respectively.
In \cite{Prendergast2018} an inequality curve is considered which can be regarded as a~quantile version of the $D$ curve.
Namely, the symmetric ratio of quantiles $R(p)=Q(p/2)/Q(1-p/2)=L_{2}/p$ is introduced. The curve $qD(p;Q)=1-R(p).$
Thus, in a~sense, the quantile version of the $D$ curve was introduced earlier than the $D$ curve.

Let us notice that the curves $qB,$ $qZ,$ $qD$ are scale invariant,
but it is possible that different distributions can have the same quantile version of the Bonferroni curve $qB$.
For this reason, in the sequel we will only consider nonparametric estimation of the curves $qZ,$ $qD,$ and the following
quantile versions
\begin{equation*}\label{e:qZI}
qZI(Q)=\int_{0}^{1}qZ(p;Q)\,dp,
\end{equation*}
\begin{equation*}\label{e:qDI}
qDI(Q)=\int_{0}^{1}qD(p;Q)\,dp=1-\int_{0}^{1}R(p)\,dp
\end{equation*}
of the inequality measures $ZI,$ $DI$, respectively. The index $qZI(Q)$ was mentioned in \cite{Prendergast2018} as an alternative to the Zenga index.

For each $p\in(0,1),$ the value $qZ(p;Q)$ is one minus the ratio of the median income of the $p$ percent of the poorest to
the median income of the remaining $1-p$ percent of the richest in~population.
In the case of $qD,$ for each $p\in(0,1),$ the value $qD(p;Q)$ is one minus the ratio of the median income of the $p$ percent of the poorest to
the median income of the $p$ percent of the richest in a~population.
For both the $qZ$ and $qD$ curve, the line of perfect equality is given by $y=0.$
Figures \ref{fig:qZ_example_5} -- \ref{fig:qD_example_10} contain examples of the $qZ$ and $qD$ curves plots, corresponding to the Dagum distribution
$\mathcal{D}(\sigma ,a,b)$ with the cdf
\begin{equation}\label{e:FDagum}
F(x)=\Big[1+\Big(\frac{x}{\sigma}\Big)^{-a}\Big]^{-b},
\end{equation}
for $x>0.$

%%%%%%%%%%%%%%%%%%%%%%%%%%%%%%%%%%%%%%%%%%%%%%%
%%%%%% TWO FIGURES next to each other %%%%%%%%% %%%%%%%%%%%%%%%
\par\noindent
%%%%%%%%%%%%%%%%%%%%%%%%%%%%%%%%%%%%%%%%%%%%%
\begin{minipage}[b]{0.45\textwidth}
\begin{figure}[H]
\includegraphics[angle=0,width=\textwidth]
{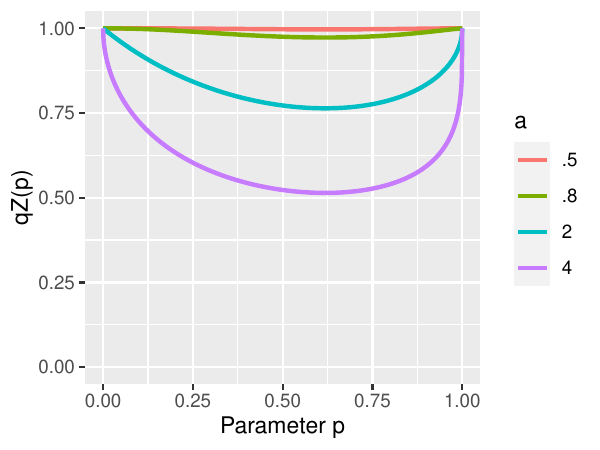}
\caption[Plots of $qZ$ curves corresponding to the Dagum distribution with parameter $b=.5$]{Plots of $qZ$ curves corresponding to the Dagum distribution with parameter $b=.5$}
\label{fig:qZ_example_5}
\end{figure}
\end{minipage}
\hfill
\begin{minipage}[b]{0.45\textwidth}
\begin{figure}[H]
\includegraphics[angle=0,width=\textwidth]
{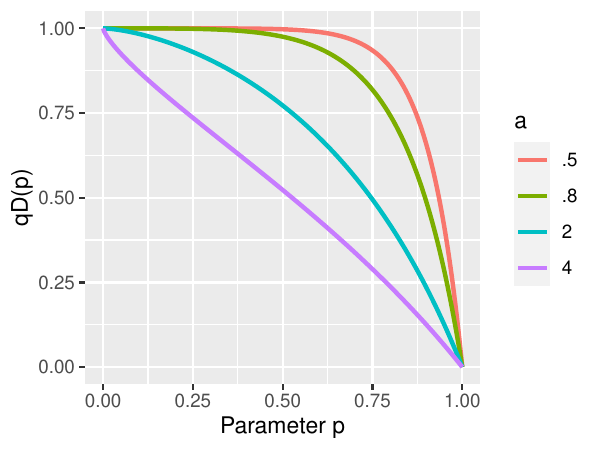}
\caption[Plots of $qD$ curves corresponding to the Dagum distribution with parameter $b=.5$]{Plots of $qD$ curves corresponding to the Dagum distribution with parameter $b=.5$}
\label{fig:qD_example_5}
\end{figure}
\end{minipage}
\par\vspace{1ex}
%%%%%%%%%%%%%%%%%%%%%%%%%%%%%%%%%%%%%%%%%%%%

%%%%%%%%%%%%%%%%%%%%%%%%%%%%%%%%%%%%%%%%%%%%%%%
%%%%%% TWO FIGURES next to each other %%%%%%%%% %%%%%%%%%%%%%%%
\par\noindent
%%%%%%%%%%%%%%%%%%%%%%%%%%%%%%%%%%%%%%%%%%%%%
\begin{minipage}[b]{0.45\textwidth}
\begin{figure}[H]
\includegraphics[angle=0,width=\textwidth]
{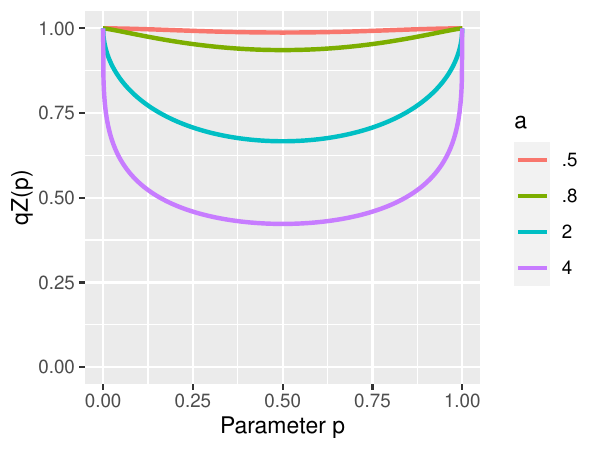}
\caption[Plots of $qZ$ curves corresponding to the Dagum distribution with parameter $b=1$]{Plots of $qZ$ curves corresponding to the Dagum distribution with parameter $b=1$}
\label{fig:qZ_example_10}
\end{figure}
\end{minipage}
\hfill
\begin{minipage}[b]{0.45\textwidth}
\begin{figure}[H]
\includegraphics[angle=0,width=\textwidth]
{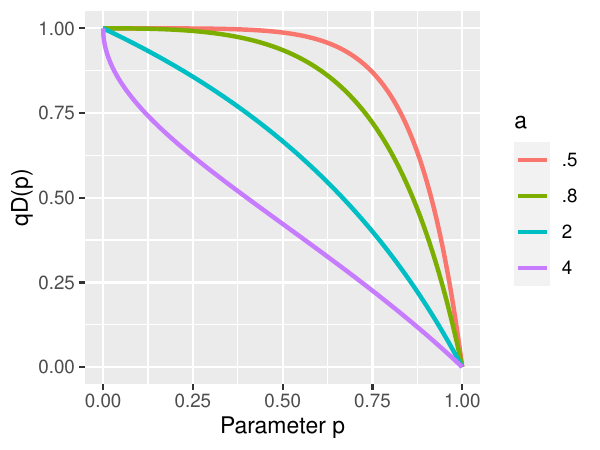}
\caption[Plots of $qD$ curves corresponding to the Dagum distribution with parameter $b=1$]{Plots of $qD$ curves corresponding to the Dagum distribution with parameter $b=1$}
\label{fig:qD_example_10}
\end{figure}
\end{minipage}
\par\vspace{1ex}
%%%%%%%%%%%%%%%%%%%%%%%%%%%%%%%%%%%%%%%%%%%%

In Figures \ref{fig:qZ_example_5} and \ref{fig:qZ_example_10} we can notice that for $a=0.5$ and $a=0.8$, the $qZ$ curve is very close to 1.
Therefore, the $qD$ curve, presented in Figures \ref{fig:qD_example_5} and \ref{fig:qD_example_10} , may be more useful in comparing distributions with infinite expected value.

The interpretation of the measures $qZI$ and $qDI$ is as follows.
Let $F$ be a~continuous cdf on $[0,\infty]$ with median $m=Q(1/2),$
and let $X^{*}$ be a~randomly chosen income from those incomes less than the median $m.$
Then, $X^{*}\sim F^{*},$ where $F^{*}(x)=2F(x)$ for $x<m.$
Next, let $Y^{*}=Q(1/2+F(X^{*})),$ so if $X^{*}=Q(r),$ then $Y^{*}=Q(1/2+r).$
It can be shown that
\begin{equation}\label{eq:bs_qzi}
qZI(Q)=E_{Q}\left(\frac{Y^{*}-X^{*}}{Y^{*}}\right).
\end{equation}
Let $Y^{**}=Q(1-F(X^{*})).$ Then, if $X^{*}=Q(r),$ then $Y^{**}=Q(1-r).$
It can be shown that
\begin{equation}\label{eq:bs_qdi}
qDI(Q)=E_{Q}\left(\frac{Y^{**}-X^{*}}{Y^{**}}\right).
\end{equation}

It can be easily shown that the indices $qZI(Q)$ and $qDI(Q)$ satisfy two basic properties: scale invariance and decrease under uniform increase in incomes.
They can also be used with the zero-inflated distributions.

%%%%%%%%%%%%%%%%%%%%%%%%%%%%%%%%%%%%%%%%%%%%%%%%%%%%%%%%%%%%%%%%%%%%%%%%%%%%%%%%%%
\section{Nonparametric estimators of the inequality curves and the inequality measures}\label{sec:Estimation}
%%%%%%%%%%%%%%%%%%%%%%%%%%%%%%%%%%%%%%%%%%%%%%%%%%%%%%%%%%%%%%%%%%%%%%%%%%%%%%%%%%%

Given a~random sample ${\bf X}=(X_{1},\ldots ,X_{n}),$ from an unknown continuous distribution function $F,$ with the support $(0,\infty),$ and quantile function $Q$,
we are interested in estimation of the curves $qZ(p;Q),$ $qD(p;Q),$ and indices $qZI(Q),$ $qDI(Q).$
We consider the plug-in estimators
\begin{equation}\label{e:qZest}
\widehat{qZ}(p;{\bf X})=qZ(p;{\widehat Q}(p;{\bf X})),
\end{equation}
\begin{equation}\label{e:qDest}
\widehat{qD}(p;{\bf X})=qD(p;{\widehat Q}(p;{\bf X}))
\end{equation}
of the curves $qZ(p;Q),$ $qD(p;Q),$ respectively, and
\begin{equation}\label{e:qZIest}
\widehat{qZI}({\bf X})=\int_{0}^{1}qZ(p;{\widehat Q}(p;{\bf X}))\,dp,
\end{equation}
\begin{equation}\label{e:qDIest}
\widehat{qDI}({\bf X})=\int_{0}^{1}qD(p;{\widehat Q}(p;{\bf X}))\,dp
\end{equation}
of the indices $qZI(Q),$ $qDI(Q),$
where ${\widehat Q}(p;{\bf X})$ is an estimator of the quantile function $Q(p).$
A natural estimator of the $Q(p)$ is the empirical quantile function
\begin{equation}\label{e:Qn}
\widehat{Q}_{n}^{E}(p;{\bf X}):=\inf\{t:{\widehat F}_{n}^{E}(t;{\bf X})\geq p\},
\end{equation}
where
\begin{equation}\label{EDF}
{\widehat F}_{n}^{E}(t;{\bf X}):=\frac{1}{n}\sum_{i=1}^{n}I_{(-\infty,t]}(X_{i})
\end{equation}
is the empirical distribution function, and $I_{A}(x)=1$ if $x\in A$ and $0$~otherwise.

For each $p\in(0,1),$ the estimator ${\widehat Q}_{n}^{E}(p;{\bf X}),$ given by (\ref{e:Qn}), is called the sample quantile of order $p.$
In the literature and statistical software there is a~large number of different definitions used for sample quantiles.
In a~widely cited article \cite{Hyndman1996}, the authors analysed nine different sample quantile definitions.
Most of them are based on quantile function estimators ${\widehat Q}_{n}$ constructed by linear interpolation between so-called plotting positions,
i.e., the points $p_{k},$ $k=1,\ldots,n,$ for which ${\widehat Q}(p_{k})=X_{k:n},$
where $X_{1:n},\ldots ,X_{n:n}$ denote the order statistics of the sample ${\bf X}_{n}.$
In \cite{Hyndman1996} the authors have compared sample quantiles describing their motivation and whether or not they possess some desired properties.
Among the estimators compared in \cite{Hyndman1996}, only the estimator proposed by Hazen in \cite{Hazen1914}, which is based on the plotting positions
\begin{equation}\label{ppH}
p_{k}^{H}=\frac{k-1/2}{n}, \qquad k=1,\ldots,n,
\end{equation}satisfies all six properties.
However, Hyndman and Fan in \cite{Hyndman1996} recommend the estimator, which is based on the plotting positions
\begin{equation}\label{ppHF}
p_{k}^{HF}=\frac{k-1/3}{n+1/3}, \qquad k=1,\ldots,n,
\end{equation}
because it gives approximately median-unbiased estimates of $Q(p)$ regardless of the distribution.
The problem of a~need to adopt a~standard definition for sample quantiles was also discussed by Langford \cite{Langford2006},
who identified twelve different sample quantile definitions that are used in statistical software.
Makkonen and Pajari in \cite{Makkonen2014} recommend the sample quantiles, proposed by Weibull \cite{Weibull1939} and Gumbel \cite{Gumbel1939}, based on plotting positions
\begin{equation}\label{ppWG}
p_{k}^{WG}=\frac{k}{n+1}, \qquad k=1,\ldots,n.
\end{equation}

In the sequel, the estimators of the quantile function $Q(p)$ based on plotting position $p_{k}^{H},$ given by (\ref{ppH}), $p_{k}^{HF},$ given by (\ref{ppHF}),
$p_{k}^{WG},$ given by (\ref{ppWG}), and the sample ${\bf X}=(X_{1},\ldots,X_{n})$ of size $n$, will be denoted by ${\widehat Q}_{n}^{H}(p;{\bf X}),$
${\widehat Q}_{n}^{HF}(p;{\bf X}),$ ${\widehat Q}_{n}^{WG}(p;{\bf X}),$ respectively.
Analogously, we will denote the plug-in estimators of the curves $qZ,$ $qD$ and the indices $qZI,$ $qDI$.
For example, the plug-in estimator of $qZ$ and $qZI$ based on ${\widehat Q}_{n}^{H}(p;{\bf X})$ will be denoted by
$\widehat{qZ}_{n}^{H}(p;{\bf X})$ and $\widehat{qZI}_{n}^{H}({\bf X}),$ respectively.

\begin{remark}
    Since the estimators $\widehat{qZ}_{n}^{E}, \widehat{qZ}_{n}^{H}, \widehat{qZ}_{n}^{WG}$ and $\widehat{qZ}_{n}^{HF}$ are defined as a~ratio of two linear functions of $p$, they can be easily integrated analytically. This leads to the closed-form expression for $\widehat{qZ}_{n}^{E}, \widehat{qZI}_{n}^{H}, \widehat{qZI}_{n}^{WG}$ and $\widehat{qZI}_{n}^{HF}$, depending on the values of $p_k$ and $X_{k:n}$. The analogous property holds also for estimators of $qD$ and $qDI$.
\end{remark}

%%%%%%%%%%%%%%%%%%%%%%%%%%%%%%%%%%%%%%%%%%%%%%%%%%%%%%%%%%%%%%
\section{Properties of the plug-in estimators of the quantile versions of the inequality curves and the inequality measures}
%%%%%%%%%%%%%%%%%%%%%%%%%%%%%%%%%%%%%%%%%%%%%%%%%%%%%%%%%%%%%%%

In this section, we prove the strong consistency of the plug-in estimators of $qZ$, $qD$, $qZI$ and $qDI$.
We also show that the appropriately scaled empirical processes of $qZ$ and $qD$ converge to a~Gaussian process.

% \subsection{Consistency}
Let $\widetilde{F}_{n}(t;{\bf X})$ be an estimator of cdf $F$ such that
\begin{equation*}\label{e:Ftilde}
\sup_{t\in{\mathbb R}}|\widetilde{F}_{n}(t;{\bf X})-\widehat{F}_{n}^{E}(t;{\bf X})|\leq \frac{1}{n} \ \ {\rm a.s.}
\end{equation*}
Denote
\begin{equation}\label{e:Qtilde}
\widetilde{Q}_{n}(p;{\bf X})=\inf\{t:\widetilde{F}_{n}(t;{\bf X})\geq p\},
\end{equation}
and $\widetilde{qZ}_{n}(p;{\bf X}),$ $\widetilde{qD}_{n}(p;{\bf X})$ the plug-in estimator, based on the estimator $\widetilde{Q}_{n}(p;{\bf X}),$
of $qZ(p;Q),$ $qD(p;Q),$ respectively.

\begin{remark}\label{rem:estTilde}
From formulas (\ref{ppH}), (\ref{ppHF}) and (\ref{ppWG}), it can be easily shown that the estimators
${\widehat Q}_{n}^{H}(p;{\bf X}),$ ${\widehat Q}_{n}^{HF}(p;{\bf X}),$ ${\widehat Q}_{n}^{WG}(p;{\bf X})$ are special cases of $\widetilde{Q}_{n}(p;{\bf X}),$
given by (\ref{e:Qtilde}). The estimator ${\widehat Q}_{n}^{E}(p;{\bf X})$ is, of course, also a~special case of $\widetilde{Q}_{n}(p;{\bf X}).$
\end{remark}

\begin{theorem}\label{th:pointConv}
%%%%%%%%%%%%%%%%%%%%%%%%%%%%%%%%%%%%%%%%%%%%%%%%%%%%%%%%%%%%%%%%%%%%%%%%%%%
Let ${\bf X}=(X_{1},\ldots,X_{n})$ be a~sample from the distribution with a~continuous cdf $F,$ and the support $(0,\infty).$
Then, for each $p\in(0,1),$
\begin{equation*}\label{e:pointConvZ}
\widetilde{qZ}_{n}(p;{\bf X})\xrightarrow{a.s.} qZ(p;Q), \ \  when \ \ n\rightarrow\infty ,
\end{equation*}
\begin{equation*}\label{e:pointConvD}
\widetilde{qD}_{n}(p;{\bf X})\xrightarrow{a.s.} qD(p;Q), \ \  when \ \ n\rightarrow\infty .
\end{equation*}
\end{theorem}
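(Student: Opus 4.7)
The plan is to reduce both statements to pointwise almost sure convergence of the quantile estimator $\widetilde{Q}_{n}(u;{\bf X}) \to Q(u)$ at the three values $u \in \{p/2,\, (1+p)/2,\, 1-p/2\}$, and then conclude by the continuous mapping theorem, since $qZ$ and $qD$ are (away from zero) continuous functions of ratios of quantile values.

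First, I would upgrade the hypothesis to uniform almost sure convergence of $\widetilde{F}_{n}$ to $F$. The classical Glivenko--Cantelli theorem gives $\sup_{t\in\mathbb{R}}|\widehat{F}_{n}^{E}(t;{\bf X}) - F(t)| \to 0$ a.s., and combining this with the assumption $\sup_{t}|\widetilde{F}_{n}(t;{\bf X}) - \widehat{F}_{n}^{E}(t;{\bf X})| \leq 1/n$ a.s.\ by the triangle inequality yields
\begin{equation*}
\sup_{t\in\mathbb{R}}|\widetilde{F}_{n}(t;{\bf X}) - F(t)| \to 0 \quad \text{a.s.}
\end{equation*}

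Next, I would deduce pointwise almost sure convergence of the generalised inverses. The assumption that $F$ is continuous with support $(0,\infty)$ implies that $F$ is continuous and strictly increasing on $(0,\infty)$, so $Q$ is continuous on $(0,1)$ and satisfies $Q(u) > 0$ for all $u \in (0,1)$. Under these conditions, the standard inversion lemma for distribution functions (uniform convergence of $F_{n}$ to a continuous strictly increasing $F$ implies pointwise convergence of the inverses at every $u \in (0,1)$) applies to every sample path in the almost sure event from the previous step, giving
\begin{equation*}
\widetilde{Q}_{n}(u;{\bf X}) \xrightarrow{a.s.} Q(u), \quad u \in (0,1).
\end{equation*}
Applying this at $u = p/2$, $u = (1+p)/2$ and $u = 1 - p/2$, and intersecting the three probability-one events, all three quantile estimators converge a.s.\ on a single event of full probability, with strictly positive limits.

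Finally, since the maps $(a,b) \mapsto 1 - a/b$ appearing in the definitions \eqref{e:qZ} and \eqref{e:qD} are continuous at points with $b > 0$, the continuous mapping theorem yields
\begin{equation*}
\widetilde{qZ}_{n}(p;{\bf X}) = 1 - \frac{\widetilde{Q}_{n}(p/2;{\bf X})}{\widetilde{Q}_{n}((1+p)/2;{\bf X})} \xrightarrow{a.s.} 1 - \frac{Q(p/2)}{Q((1+p)/2)} = qZ(p;Q),
\end{equation*}
and analogously $\widetilde{qD}_{n}(p;{\bf X}) \xrightarrow{a.s.} qD(p;Q)$. The main technical point to be careful about is the justification that $Q$ is continuous at each of the three prescribed arguments, which rests precisely on the hypothesis that the support is $(0,\infty)$; this also guarantees that the denominators $Q((1+p)/2)$ and $Q(1-p/2)$ stay bounded away from zero, so that the continuous mapping step is legitimate. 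Everything else is a routine combination of Glivenko--Cantelli and the sup-norm bound assumed on $\widetilde{F}_{n}$.
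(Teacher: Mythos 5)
Your proposal is correct and follows essentially the same route as the paper: reduce the claim to pointwise almost sure convergence of $\widetilde{Q}_{n}(u;{\bf X})$ to $Q(u)$ at the relevant arguments, note that $Q>0$ on $(0,1)$ so the denominators are safe, and conclude by continuity of the ratio map. The only difference is that the paper obtains the quantile convergence by citing Theorem 3.2 of Jokiel-Rokita and Siedlaczek (2019), whereas you derive it directly from Glivenko--Cantelli, the assumed sup-norm bound $\sup_t|\widetilde{F}_n-\widehat{F}_n^{E}|\le 1/n$, and the standard inversion lemma --- a self-contained and equally valid justification of the same intermediate fact.
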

\begin{proof}
If ${\bf X}$ satisfies the assumptions of the theorem, then from Theorem 3.2 in~\cite{JokSied2019}, we have that for each $p\in(0,1),$
\begin{equation*}
\widetilde{Q}_{n}(p;{\bf X})\xrightarrow{a.s.} Q(p)\ \  when \ \ n\rightarrow\infty ,
\end{equation*}
and $P_{F}(\widetilde{Q}_{n}(p;{\bf X})\neq 0)=1.$
Therefore, the theorem follows from the definition of the almost sure convergence and formulas (\ref{e:qZ}), (\ref{e:qD}).
\end{proof}

\begin{theorem}\label{th:supConv}
%%%%%%%%%%%%%%%%%%%%%%%%%%%%%%%%%%%%%%%%%%%%%%%%%%%%%%%%%%%%%%%%%%%%%%%%%%
Under the assumptions of Theorem \ref{th:pointConv},
\begin{equation*}\label{e:supConvZ}
\sup_{p\in(0,1)}|\widetilde{qZ}_{n}(p;{\bf X})-qZ(p;Q)| \xrightarrow{a.s.} 0, \ \  when \ \ n\rightarrow\infty ,
\end{equation*}
\begin{equation*}\label{e:supConvD}
\sup_{p\in(0,1)}|\widetilde{qD}_{n}(p;{\bf X})-qD(p;Q)| \xrightarrow{a.s.} 0, \ \  when \ \ n\rightarrow\infty .
\end{equation*}
\end{theorem}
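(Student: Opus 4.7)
The plan is to upgrade the pointwise convergence of $\widetilde{Q}_n$ established in Theorem~\ref{th:pointConv} to uniform convergence on compact subintervals of $(0,1)$, and then to control $\widetilde{qZ}_n$ and $\widetilde{qD}_n$ separately in the interior and in the boundary regions near $p=0$ and $p=1$. Note first that the continuity of $F$ together with support $(0,\infty)$ implies that $Q$ is continuous and strictly increasing on $(0,1)$, with $Q(0^+)=0$ and $Q(1^-)=\infty$.

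First I would show that for every $a\in(0,1/2)$ the stronger statement
\[
\sup_{p\in[a,1-a]}|\widetilde{Q}_n(p;\mathbf{X})-Q(p)|\xrightarrow{a.s.}0
\]
holds. Theorem~\ref{th:pointConv} gives pointwise a.s.\ convergence $\widetilde{Q}_n(p;\mathbf{X})\to Q(p)$ for each $p$; applying this on a countable dense set $D\subset(0,1)$ and intersecting the corresponding full-measure events yields an event on which $\widetilde{Q}_n(p;\mathbf{X})\to Q(p)$ for every $p\in D$. The monotonicity of each $\widetilde{Q}_n$ combined with the continuity of $Q$ then upgrades this to uniform convergence on $[a,1-a]$ by the classical Polya-type argument for monotone functions.

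For the interior region $p\in[a,1-a]$, the arguments $p/2,(1+p)/2,1-p/2$ appearing in (\ref{e:qZ}) and (\ref{e:qD}) all lie in the compact set $[a/2,1-a/2]$, on which $Q$ is bounded away from $0$. Uniform convergence of $\widetilde{Q}_n$ on this set therefore transfers to uniform convergence of the ratios $\widetilde{Q}_n(p/2)/\widetilde{Q}_n((1+p)/2)$ and $\widetilde{Q}_n(p/2)/\widetilde{Q}_n(1-p/2)$, and hence of $\widetilde{qZ}_n$ and $\widetilde{qD}_n$, on $[a,1-a]$.

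The main obstacle is the boundary region $(0,a)\cup(1-a,1)$, which I would handle by exploiting monotonicity to sandwich both the true curve and its estimator near the appropriate limit. Fix $\epsilon>0$. For $p\in(0,a)$, since $\widetilde{Q}_n$ and $Q$ are nondecreasing,
\[
1-\frac{\widetilde{Q}_n(a/2;\mathbf{X})}{\widetilde{Q}_n(1/2;\mathbf{X})}\le \widetilde{qZ}_n(p;\mathbf{X})\le 1,\qquad 1-\frac{Q(a/2)}{Q(1/2)}\le qZ(p;Q)\le 1,
\]
and an analogous pair of inequalities holds for $qD$. By pointwise a.s.\ convergence at the fixed arguments $a/2$ and $1/2$, the lower bound for $\widetilde{qZ}_n$ converges to the one for $qZ$; since $Q(a/2)\to 0$ as $a\to 0$, both quantities can be forced within $\epsilon/2$ of $1$ on $(0,a)$ by choosing $a$ small and $n$ large. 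A symmetric argument using $Q(1-a/2)\to\infty$ controls $qZ$ on $(1-a,1)$, and the continuity of $Q$ at $1/2$, which forces $Q(p/2)/Q(1-p/2)\to 1$ as $p\to 1$, controls $qD$ there. Choosing $a$ first according to these boundary estimates and then taking $n$ large enough that the interior bound gives $\sup_{p\in[a,1-a]}|\widetilde{qZ}_n-qZ|<\epsilon$ (and likewise for $qD$) delivers the uniform a.s.\ convergence claimed in Theorem~\ref{th:supConv}.
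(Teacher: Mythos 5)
Your proof is correct, but it is substantially more detailed than, and structurally different from, the paper's own argument. The paper's proof of Theorem \ref{th:supConv} is two lines: it asserts that for each realisation the functions $\widetilde{qZ}_{n}(\cdot;{\bf x})$ converge pointwise to $qZ(\cdot;Q)$, notes that $qZ(\cdot;Q)$ is uniformly continuous on $[0,1]$, and concludes uniform convergence. As stated, that inference is not automatic — pointwise convergence to a uniformly continuous limit does not by itself give uniform convergence, and the passage from ``a.s.\ convergence for each fixed $p$'' to ``for a.e.\ realisation, convergence for every $p$'' also needs a word. The implicit structure that makes the paper's conclusion true is exactly what you supply explicitly: the monotonicity of $\widetilde{Q}_{n}$, which (i) lets you pass from a countable dense set to all $p$ on a single full-measure event, (ii) gives the Polya-type upgrade to uniform convergence of $\widetilde{Q}_{n}$ on compacta $[a,1-a]$, and (iii) yields the sandwich bounds near $p=0$ and $p=1$, where $Q$ is not bounded away from $0$ or $\infty$ and the interior argument breaks down. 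Your boundary analysis is the genuinely nontrivial part (it uses $Q(0^{+})=0$, $Q(1^{-})=\infty$, and continuity of $Q$ at $1/2$ to show both the curve and its estimator are pinned near $1$, respectively near the appropriate limit, on $(0,a)\cup(1-a,1)$), and it is absent from the paper. In short: your route is longer but self-contained and repairs a real gap; the paper's route is shorter but leans on an unstated compactness/monotonicity argument. The one cosmetic caveat is that the supremum in the statement is over the open interval $(0,1)$, so ``uniform continuity on $[0,1]$'' in the paper really means that $qZ$ and $qD$ extend continuously to the endpoints — which your explicit computation of the boundary limits confirms.
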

\begin{proof}
Based on Theorem \ref{th:pointConv}, for each realisation ${\bf x}$ of the sample ${\bf X},$ the sequence of functions $\widetilde{qZ}_{n}(p;{\bf x})$
of the argument $p,$ converges pointwise to $qZ(p;Q).$
Furthermore, under the assumptions concerning the cdf $F,$ the $qZ(p;Q),$ as the function of~$p,$ is uniformly continuous on $[0,1].$
Therefore, for each realisation ${\bf x}$ of the sample ${\bf X},$ the sequence of functions $\widetilde{qZ}_{n}(p;{\bf x})$
converges to $qZ(p;Q)$ uniformly.
The proof of the second part of the theorem is analogous and will be omitted.
\end{proof}

\begin{corollary}
%%%%%%%%%%%%%%%%%%%%%%%%%%%%%%%%%%%%%%%%%%%%%%%%%%%%%%%%%%%%%%%%%%%%%%%%%%%%
Under the assumptions of Theorem \ref{th:pointConv}, taking into account Remark~\ref{rem:estTilde},
\begin{enumerate}
\item[(i)]
$\widehat{qZ}_{n}^{E}(p;{\bf X}),$  $\widehat{qZ}_{n}^{H}(p;{\bf X}),$
$\widehat{qZ}_{n}^{HF}(p;{\bf X}),$ $\widehat{qZ}_{n}^{WG}(p;{\bf X})$ are strongly consistent estimators of $qZ(p;Q),$
\item[(ii)]
$\widehat{qD}_{n}^{E}(p;{\bf X}),$ $\widehat{qD}_{n}^{H}(p;{\bf X}),$
$\widehat{qD}_{n}^{HF}(p;{\bf X}),$ $\widehat{qD}_{n}^{WG}(p;{\bf X})$ are strongly consistent estimators of $qD(p;Q).$
\end{enumerate}
\end{corollary}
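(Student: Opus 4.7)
The plan is to reduce the corollary directly to Theorem~\ref{th:pointConv} by way of Remark~\ref{rem:estTilde}. Specifically, I would argue that each of the four quantile estimators $\widehat{Q}_{n}^{E}$, $\widehat{Q}_{n}^{H}$, $\widehat{Q}_{n}^{HF}$, $\widehat{Q}_{n}^{WG}$ is an instance of the abstract estimator $\widetilde{Q}_{n}$ introduced in~(\ref{e:Qtilde}); the plug-in estimators of $qZ$ and $qD$ based on them are then instances of $\widetilde{qZ}_{n}$ and $\widetilde{qD}_{n}$, whose pointwise strong consistency was established in Theorem~\ref{th:pointConv}.

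The first step is to exhibit, for each plotting position, an underlying cdf estimator $\widetilde{F}_{n}^{\bullet}$ whose generalised inverse (in the sense of~(\ref{e:Qtilde})) yields the corresponding $\widehat{Q}_{n}^{\bullet}$. A natural choice is the monotone piecewise linear function interpolating the points $(X_{k:n},p_{k}^{\bullet})$, $k=1,\ldots,n$, suitably extended outside $[X_{1:n},X_{n:n}]$. The second step is to verify the hypothesis
\begin{equation*}
\sup_{t\in\mathbb{R}}\bigl|\widetilde{F}_{n}^{\bullet}(t;\mathbf{X})-\widehat{F}_{n}^{E}(t;\mathbf{X})\bigr|\leq \tfrac{1}{n}\ \ \text{a.s.},
\end{equation*}
which by monotonicity of both functions reduces to the discrete bound $\max_{1\leq k\leq n}|p_{k}^{\bullet}-k/n|\leq 1/n$. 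For $\widehat{F}_{n}^{E}$ itself this is trivial; for Hazen the maximum equals $1/(2n)$; for Weibull--Gumbel it equals $k/[n(n+1)]\leq 1/(n+1)$; for Hyndman--Fan a short computation yields $|p_{k}^{HF}-k/n|=(n+k)/[3n(n+1/3)]\leq 2/(3n+1)\leq 1/n$.

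Once these bounds are in place, the corollary follows immediately: Theorem~\ref{th:pointConv} applied in each of the four cases gives the pointwise almost-sure convergence of the corresponding plug-in estimators to $qZ(p;Q)$ and $qD(p;Q)$, which is precisely parts~(i) and~(ii) of the statement. I expect the main, though still entirely routine, obstacle to be the algebraic bound for the Hyndman--Fan positions, because there both the numerator and the denominator of $p_{k}^{HF}$ differ from those of $k/n$, so the bound cannot be read off by inspection as in the Hazen or Weibull--Gumbel cases.
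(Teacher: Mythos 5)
Your proposal is correct and follows the paper's own route: the paper likewise obtains the corollary by invoking Remark~\ref{rem:estTilde} to identify each plotting-position estimator as a special case of $\widetilde{Q}_{n}(p;{\bf X})$ and then applying Theorem~\ref{th:pointConv}, leaving the $1/n$-bound verification as ``easily shown,'' which you carry out explicitly. One small precision: because the interpolated cdf climbs to $p_{k+1}^{\bullet}$ on $[X_{k:n},X_{k+1:n})$ while $\widehat{F}_{n}^{E}$ equals $k/n$ there, the exact discrete condition is $(k-1)/n\leq p_{k}^{\bullet}\leq k/n$ for all $k$ rather than merely $\max_{k}|p_{k}^{\bullet}-k/n|\leq 1/n$; all three plotting positions satisfy this two-sided bound, so your conclusion stands.
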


\begin{theorem}\label{th:IndexConv}
%%%%%%%%%%%%%%%%%%%%%%%%%%%%%%%%%%%%%%%%%%%%%%%%%%%%%%%%%%%%%%%%%%%%%%%%%%%%%%%%%%
Under the assumptions of Theorem \ref{th:pointConv},
\begin{equation*}\label{e:pointConvZI}
\widetilde{qZI}_{n}({\bf X})\xrightarrow{a.s.} qZI(Q), \ \  when \ \ n\rightarrow\infty ,
\end{equation*}
\begin{equation*}\label{e:pointConvDI}
\widetilde{qDI}_{n}({\bf X})\xrightarrow{a.s.} qDI(Q), \ \  when \ \ n\rightarrow\infty .
\end{equation*}
\end{theorem}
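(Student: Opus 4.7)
The plan is to derive the result directly from the uniform convergence established in Theorem \ref{th:supConv}, since the index estimators are obtained by integrating the curve estimators over $[0,1]$, a set of finite Lebesgue measure.

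First I would observe that by the triangle inequality for integrals,
\begin{equation*}
\bigl|\widetilde{qZI}_{n}({\bf X})-qZI(Q)\bigr|
=\left|\int_{0}^{1}\bigl[\widetilde{qZ}_{n}(p;{\bf X})-qZ(p;Q)\bigr]\,dp\right|
\leq \sup_{p\in(0,1)}\bigl|\widetilde{qZ}_{n}(p;{\bf X})-qZ(p;Q)\bigr|,
\end{equation*}
and the right-hand side tends to $0$ almost surely by Theorem \ref{th:supConv}. The analogous estimate with $qZ,qZI$ replaced by $qD,qDI$ gives the second convergence.

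Before writing this out I would verify that the integrals are well defined. Under the assumption that $F$ is continuous with support $(0,\infty)$, the monotonicity of $Q$ and $p/2\leq (1+p)/2$, $p/2\leq 1-p/2$ for $p\in(0,1)$ imply $0\leq qZ(p;Q)\leq 1$ and $0\leq qD(p;Q)\leq 1$, so $qZ(\cdot;Q)$ and $qD(\cdot;Q)$ are Lebesgue-integrable on $(0,1)$. On the event of probability one from Theorem \ref{th:pointConv} where $P_{F}(\widetilde{Q}_{n}(p;{\bf X})\neq 0)=1$ and $\widetilde{Q}_{n}$ is nondecreasing (which holds by construction from the monotone estimator $\widetilde{F}_{n}$), the same inequalities yield $0\leq \widetilde{qZ}_{n}(p;{\bf X}),\ \widetilde{qD}_{n}(p;{\bf X})\leq 1$, so the integrals defining $\widetilde{qZI}_{n}({\bf X})$ and $\widetilde{qDI}_{n}({\bf X})$ are finite and measurable in ${\bf X}$.

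I do not expect any genuine obstacle; the only mildly delicate point is the a.s.\ monotonicity and positivity of $\widetilde{Q}_{n}$ required to ensure the boundedness of $\widetilde{qZ}_{n}$ and $\widetilde{qD}_{n}$ on $(0,1)$, and these follow from the definition of $\widetilde{Q}_{n}$ in \eqref{e:Qtilde} together with the a.s.\ statement already used in the proof of Theorem \ref{th:pointConv}. An alternative, essentially equivalent route would be to apply the bounded convergence theorem to the pointwise limit from Theorem \ref{th:pointConv}, but the supremum bound above gives a one-line argument and avoids invoking a measure-theoretic convergence theorem.
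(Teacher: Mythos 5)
Your proposal is correct and follows exactly the paper's own argument: bound the difference of the integrals by the supremum of the integrand and invoke the uniform convergence of Theorem \ref{th:supConv}. The extra remarks on the boundedness and measurability of the curve estimators are a harmless (and reasonable) addition that the paper leaves implicit.
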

\begin{proof}
From the definition of $\widetilde{qZI}_{n}({\bf X})$ and $qZI(Q),$ we have
\begin{equation*}
\widetilde{qZI}_{n}({\bf X})-qZI(Q)=\int_{0}^{1}
\left[\widetilde{qZ}_{n}(p;{\bf X})-qZ(p;Q)\right]\,dp.
\end{equation*}
The theorem follows from the inequality
\begin{eqnarray*}
\left|\int_{0}^{1}\left[\widetilde{qZ}_{n}(p;{\bf X})-qZ(p;Q)\right]\,dp \right|&\leq & \sup_{p\in(0,1)}|\widetilde{qZ}_{n}(p;{\bf X})-qZ(p;Q)|,
\end{eqnarray*}
and Theorem \ref{th:supConv}.
The proof of the second part of the theorem is analogous and will be omitted.
\end{proof}

\begin{corollary}
%%%%%%%%%%%%%%%%%%%%%%%%%%%%%%%%%%%%%%%%%%%%%%%%%%%%%%%%%%%%%%%%%%%%%%%%%%%%%%%%%
Under the assumptions of Theorem \ref{th:pointConv}, taking into account Remark~\ref{rem:estTilde},
\begin{enumerate}
\item[(i)]
$\widehat{qZI}_{n}^{E}({\bf X}),$ $\widehat{qZI}_{n}^{H}({\bf X}),$ $\widehat{qZI}_{n}^{HF}({\bf X}),$ $\widehat{qZI}_{n}^{WG}({\bf X})$ are strongly consistent estimators of $qZI(Q),$
\item[(ii)]
$\widehat{qDI}_{n}^{E}({\bf X}),$ $\widehat{qDI}_{n}^{H}({\bf X}),$ $\widehat{qDI}_{n}^{HF}({\bf X}),$ $\widehat{qDI}_{n}^{WG}({\bf X})$ are strongly consistent estimators of  $qDI(Q).$
\end{enumerate}
\end{corollary}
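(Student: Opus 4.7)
The plan is to derive this corollary directly from Theorem \ref{th:IndexConv} using Remark \ref{rem:estTilde}: once each of the four named estimators is recognised as a special case of the generic $\widetilde{Q}_{n}$ appearing in (\ref{e:Qtilde}), the corresponding plug-in estimator of the index is a special case of $\widetilde{qZI}_{n}$ (respectively $\widetilde{qDI}_{n}$), and strong consistency follows at once from the theorem already proved.

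The key step is therefore verifying the hypothesis $\sup_{t}|\widetilde{F}_{n}(t;{\bf X})-\widehat{F}_{n}^{E}(t;{\bf X})|\le 1/n$ almost surely for each of the three plotting-position variants (the empirical quantile case being immediate). For each choice, the associated $\widetilde{F}_{n}$ is the piecewise-linear interpolant that sends the order statistic $X_{k:n}$ to the plotting position $p_{k}$; its supremum deviation from $\widehat{F}_{n}^{E}$ then reduces to $\max_{k}|p_{k}-k/n|$. Substituting (\ref{ppH}), (\ref{ppHF}) and (\ref{ppWG}), this maximum equals $1/(2n)$ for Hazen, is bounded by $1/(n+1)$ for Weibull-Gumbel since $k/n-k/(n+1)=k/(n(n+1))$, and equals $2/(3n+1)$ for Hyndman-Fan (attained at $k=n$). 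Each of these is at most $1/n$, which is precisely the bound required in the definition of $\widetilde{Q}_{n}$.

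With that check in place, both parts of the corollary are a one-line invocation of Theorem \ref{th:IndexConv}, repeated four times. I do not anticipate any substantive obstacle beyond the bookkeeping on the plotting positions; all of the analytic content has been absorbed into Theorem \ref{th:IndexConv} and, through it, into Theorem \ref{th:supConv} and the uniform continuity of the maps $p\mapsto qZ(p;Q)$ and $p\mapsto qD(p;Q)$ on $[0,1]$ under the standing assumption that $F$ is continuous with support $(0,\infty)$.
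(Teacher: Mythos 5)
Your proposal is correct and follows the same route as the paper, which states this corollary without a separate proof as an immediate consequence of Theorem \ref{th:IndexConv} together with Remark \ref{rem:estTilde}; your extra work verifying the bound $\sup_{t}|\widetilde{F}_{n}(t;{\bf X})-\widehat{F}_{n}^{E}(t;{\bf X})|\leq 1/n$ is exactly the content the paper delegates to that remark. One small bookkeeping caveat: since $\widehat{F}_{n}^{E}$ equals $k/n$ on the whole interval $[X_{k:n},X_{k+1:n})$ while the interpolant runs from $p_{k}$ up to $p_{k+1}$ there, the supremum deviation is controlled by $\max_{k}\bigl(|p_{k}-k/n|,|p_{k+1}-k/n|\bigr)$ together with the tail pieces below $X_{1:n}$ and above $X_{n:n}$, not by $\max_{k}|p_{k}-k/n|$ alone --- but all of these quantities are still at most $1/n$ for the Hazen, Hyndman--Fan and Weibull--Gumbel positions, so the conclusion stands.
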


% \subsection{Convergence}
Denote
\begin{align}
    \xi_n^Z(p)=\sqrt{n}\left[\widehat{qZ}^E_n(p)-qZ(p)\right],
    \quad p\in[0,1],
\end{align}
and
\begin{align}
    \xi_n^D(p)=\sqrt{n}\left[\widehat{qD}^E_n(p)-qD(p)\right],
    \quad p\in[0,1]
\end{align}
random elements with values in a metric space of all bounded functions $z:[0,1]\rightarrow{\mathbb R}$ with the norm $\|z\|=\sup_{p\in[0,1]}|z(p)|.$
\begin{theorem}\label{th:BB}
    Under the assumptions of Theorem \ref{th:pointConv}, and assuming that $Q^{'}(p)=1/(f(Q(p))$ is bounded on any interval $(a,b),$ $0<a<b<1,$
    \begin{enumerate}
    \item[(i)]
      the process $\xi_n^Z(p)$ converges in probability on an interval $[a,b]$  to the Gaussian process
\begin{align*}
    G^Z(p)=\left[1-qZ(p)\right]
    \left[ \frac{Q^{'}\left(\frac{p}{2}\right)}{Q\left(\frac{p}{2}\right)}B\left(\frac{p}{2}\right)
    -\frac{Q^{'}\left(\frac{1+p}{2}\right)}{Q\left(\frac{1+p}{2}\right)}B\left(\frac{1+p}{2}\right)\right],
\end{align*}
\item[(ii)]
the process $\xi_n^D(p)$ converges in probability on an interval $[a,b]$ to the Gaussian process
\begin{align*}
    G^D(p)=\left[1-qD(p)\right]
    \left[ \frac{Q^{'}\left(\frac{p}{2}\right)}{Q\left(\frac{p}{2}\right)}B\left(\frac{p}{2}\right)
    -\frac{Q^{'}\left(1-\frac{p}{2}\right)}{Q\left(1-\frac{p}{2}\right)}B\left(1-\frac{p}{2}\right)\right],
\end{align*}
\end{enumerate}
where $B(p),$ $p\in(0,1),$ is the standard Brownian bridge.
\end{theorem}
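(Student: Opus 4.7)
The plan is to obtain Theorem \ref{th:BB} as a consequence of the classical weak convergence of the empirical quantile process on compact subintervals of $(0,1)$, combined with a uniform-in-$p$ delta-method argument. Under the boundedness hypothesis on $Q'(u)=1/f(Q(u))$, the Bahadur--Kiefer representation yields, on every compact $[\alpha,\beta]\subset(0,1)$,
\[
\sqrt{n}\bigl[\widehat{Q}_n^E(u)-Q(u)\bigr]=-\frac{U_n(u)}{f(Q(u))}+o_p(1),
\]
where $U_n(u)=\sqrt{n}[\widehat{F}_n^E(Q(u))-u]$ is the uniform empirical process. Since $U_n\Rightarrow B$ in $\ell^{\infty}([\alpha,\beta])$ with $B$ the standard Brownian bridge, it follows that $\sqrt{n}[\widehat{Q}_n^E(\cdot)-Q(\cdot)]\Rightarrow -Q'(\cdot)B(\cdot)$ in $\ell^{\infty}([\alpha,\beta])$.

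For fixed $p\in[a,b]$ set $u=Q(p/2)$, $v=Q((1+p)/2)$, $a_n(p)=\widehat{Q}_n^E(p/2)-u$, $b_n(p)=\widehat{Q}_n^E((1+p)/2)-v$. A direct manipulation of $\widehat{qZ}_n^E(p)=1-(u+a_n(p))/(v+b_n(p))$ gives the exact identity
\[
\xi_n^Z(p)=\frac{u\,\sqrt{n}\,b_n(p)-v\,\sqrt{n}\,a_n(p)}{v\bigl(v+b_n(p)\bigr)}.
\]
As $p$ varies over $[a,b]$, both $p/2$ and $(1+p)/2$ stay in the fixed compact $[a/2,(1+b)/2]\subset(0,1)$, so the first paragraph yields joint weak convergence in $\ell^{\infty}([a,b])^{2}$ of $(\sqrt{n}\,a_n,\sqrt{n}\,b_n)$ to $(-Q'(p/2)B(p/2),\,-Q'((1+p)/2)B((1+p)/2))$. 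By Theorem \ref{th:pointConv} together with the positivity and continuity of $Q$ on $(0,1)$, the denominator $v(v+b_n(p))$ converges uniformly in $p$ to $v^{2}$ and is bounded away from zero with probability tending to one. The continuous mapping theorem in $\ell^{\infty}([a,b])$ therefore produces
\[
\xi_n^Z(p)\ \Longrightarrow\ \frac{u}{v}\Bigl[\frac{Q'(p/2)}{u}\,B(p/2)-\frac{Q'((1+p)/2)}{v}\,B((1+p)/2)\Bigr],
\]
and since $u/v=1-qZ(p)$, the right-hand side coincides with $G^Z(p)$. The proof for $\xi_n^D$ is identical, with $(1+p)/2$ replaced throughout by $1-p/2$.

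The main obstacle is upgrading the delta-method computation from pointwise convergence (immediate from continuous mapping at each fixed $p$) to the uniform-in-$p$ process convergence required by the theorem. This is handled by combining the uniform Bahadur representation of the first paragraph, which provides joint tightness of the numerator in $\ell^{\infty}([a,b])^{2}$, with the uniform control of the denominator afforded by the continuity and strict positivity of $Q$ on the compact image interval $[a/2,(1+b)/2]$.
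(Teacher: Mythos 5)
Your proof is correct and follows essentially the same route as the paper's: both rest on the uniform convergence of the quantile process $\sqrt{n}\bigl(\widehat{Q}_n^E-Q\bigr)$ to $Q'B$ on compact subintervals (you derive it via the Bahadur--Kiefer representation, the paper cites the Shorack--Wellner/Bickel result directly), an algebraic decomposition of $\xi_n^Z$ into a numerator that is a linear combination of the two quantile-process values and a denominator controlled by consistency, followed by the continuous mapping theorem. Your exact rational identity for $\xi_n^Z$ is just a rearrangement of the paper's add-and-subtract decomposition, and you in fact track the sign of $\widehat{qZ}_n^E-qZ$ more carefully than the paper does (a harmless discrepancy there, since $-B$ has the same law as $B$).
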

\begin{proof}
(i) Throughtout this proof we will denote $Q_n(p):=\widehat{Q}^{E}_n(p)$ and $A_n(p)=\sqrt{n}(Q_{n}(p)-Q(p))$.
We transform $\xi_n^Z(p)$ as follows:
\begin{align}
\label{eq:product_2_rv}
\nonumber
\xi_n^Z(p)&=
     \sqrt{n}\Bigg[ \frac{Q_{n}\left(\frac{p}{2}\right)}{Q_{n}\left(\frac{1+p}{2}\right)} -  \frac{Q\left(\frac{p}{2}\right)}{Q_n(\frac{1+p}{2})}%\\
    +\frac{Q\left(\frac{p}{2}\right)}{Q_n\left(\frac{1+p}{2}\right)} -\frac{Q(\frac{p}{2})}{Q(\frac{1+p}{2})} \Bigg]\\ \nonumber
    &= \frac{1}{Q_n\left(\frac{1+p}{2}\right)}
    \Bigg\{ A_n\left(\frac{p}{2}\right)-\sqrt{n}\left[
    \frac{Q\left(\frac{p}{2}\right)}{Q\left(\frac{1+p}{2}\right)}Q_n\left(\frac{1+p}{2}\right)-Q\left(\frac{p}{2}\right)
    \right]
    \Bigg\}\\
    &= \frac{1}{Q_n\left(\frac{1+p}{2}\right)}
    \left[
    A_n\left(\frac{p}{2}\right)-\left[1-qZ(p)\right]A_n\left(\frac{1+p}{2}\right)
    \right],
\end{align}
which is a~product of two random elements.
Since random elements $Q_n\xrightarrow{d}Q$ (see for example Lemma 21.2 in \cite{VanDerVaart1998}), and $Q$ is nonrandom, we have $Q_n\xrightarrow{P}Q.$
Basing on this and using the continuous mapping theorem (see for example Theorem 2.3 in \cite{VanDerVaart1998}), we get
    \begin{align}\label{eq:quantile_conv}
    \frac{1}{Q_n\left(\frac{1+p}{2}\right)} \xrightarrow{P}
    \frac{1}{Q\left(\frac{1+p}{2}\right)}.
\end{align}
We also know (see for example Theorem 18.1.1, p. 640, in \cite{Shorack1989} or Theorem 3.1 in \cite{Bickel1967}) that if $Q^{'}$ is positive and continuous on an open subinterval of $[0, 1]$ containing an interval $[a, b]$, then
\begin{align}
    \sup_{a\leq p\leq b}|A_n(p)/Q^{'}(p)-B(p)|\xrightarrow{P}0,
\end{align}
which according to the definition of convergence in probability in metric spaces implies that
$A_n(p)\xrightarrow{P}Q^{'}(p)B(p)$.
Hence, we have (see for example Theorem 2.7 in \cite{VanDerVaart1998}) the convergence
\begin{align}\label{eq:vector_conv}
    &\left[
    A_n\left(\frac{p}{2}\right),\left[1-qZ(p)\right]A_n\left(\frac{1+p}{2}\right)
    \right]\\
    &\xrightarrow{P}
    \left[
Q^{'}\left(\frac{p}{2}\right)B\left(\frac{p}{2}\right)
,\left[1-qZ(p)\right]
Q^{'}\left(\frac{1+p}{2}\right)B\left(\frac{1+p}{2}\right)
    \right].
\end{align}
Thus, according to the continuous mapping theorem, the second factor in (\ref{eq:product_2_rv}) converges in probability to the difference of the random elements on the right-hand side of (\ref{eq:vector_conv}), and finally, the product given by (\ref{eq:product_2_rv}) converges in probability to
\begin{equation*}
\frac{1}{Q\left(\frac{1+p}{2}\right)}
    \left[ Q^{'}\left(\frac{p}{2}\right)B\left(\frac{p}{2}\right)
    -\left[1-qZ(p)\right]Q^{'}\left(\frac{1+p}{2}\right)B\left(\frac{1+p}{2}\right)\right]=G^{Z}(p).
\end{equation*}

(ii) The proof of part (ii) is analogous to (i) and will be omitted.
\end{proof}

\begin{remark}
The pointwise convergence of the ratio of two empirical quantiles to the Gaussian random variable was shown by Jędrzejczak et. al. (see Theorem 2.2 in \cite{Jedrzejczak2021}),
and their result can be obtained as a special case of Theorem \ref{th:BB}.
\end{remark}

\begin{theorem} The estimators $\widehat{qZI}^E_n$ and $\widehat{qDI}^E_n$ are asymptotically normal, namely:
    \begin{enumerate}
    \item[(i)]
    $\sqrt{n}\left(\widehat{qZI}^E_n-qZI\right)$ converges in probability to the normally distributed random variable with mean 0 and variance $\sigma^2_Z$,
    \item[(ii)]
    $\sqrt{n}\left(\widehat{qDI}^E_n-qDI\right)$ converges in probability to the normally distributed random variable with mean 0 and variance $\sigma^2_D$,
    \end{enumerate}
where $\sigma^2_Z$ and $\sigma^2_D$ are given in the proof.
\end{theorem}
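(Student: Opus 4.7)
The plan is to start from the identity
\[
\sqrt{n}\bigl(\widehat{qZI}^E_n - qZI\bigr) = \int_0^1 \xi_n^Z(p)\,dp,
\]
which follows from \eqref{e:qZIest} and the equality $qZI(Q)=\int_0^1 qZ(p;Q)\,dp$. Since the map $z\mapsto\int_0^1 z(p)\,dp$ is a continuous linear functional on the space of bounded functions on $[0,1]$ equipped with the supremum norm, I would combine Theorem \ref{th:BB} with the continuous mapping theorem to pass to the limit $\int_0^1 G^Z(p)\,dp$. This limit is a Gaussian random variable, being an $L^2$-limit of Riemann sums that are linear combinations of jointly Gaussian variables indexed by the Brownian bridge.

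The main obstacle is that Theorem \ref{th:BB} gives convergence only on closed subintervals $[a,b]\subset(0,1)$, whereas $qZI$ is an integral over the whole unit interval. To bridge this gap I would split
\[
\int_0^1 \xi_n^Z(p)\,dp = \int_0^a \xi_n^Z(p)\,dp + \int_a^b \xi_n^Z(p)\,dp + \int_b^1 \xi_n^Z(p)\,dp,
\]
handle the middle term by the continuous mapping theorem, and show that for any $\epsilon>0$ one can choose $a$ near $0$ and $b$ near $1$ so that the two boundary integrals are $O_P(\epsilon)$ uniformly in $n$. This tightness near the endpoints is the hardest step: since $Q'(p)$ may blow up as $p\to 0^+$ or $p\to 1^-$, one needs uniform second-moment bounds on $\xi_n^Z(p)$ near the boundary, which can be extracted from the explicit decomposition \eqref{eq:product_2_rv} together with standard variance estimates for ratios of sample quantiles; the damping factor $1-qZ(p)$, which vanishes as $p\to 1$, simplifies the right endpoint, while the left endpoint typically requires a mild tail/regularity condition on $F$ near $0$.

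Once tightness is in place, the limit $\int_0^1 G^Z(p)\,dp$ is Gaussian with zero mean, since $E B(t)=0$ implies $E G^Z(p)=0$ for every $p$, and its variance is
\[
\sigma^2_Z = \int_0^1\!\!\int_0^1 \operatorname{Cov}\bigl(G^Z(p),G^Z(q)\bigr)\,dp\,dq,
\]
where the covariance is obtained from the explicit form of $G^Z$ in Theorem \ref{th:BB} and the identity $\operatorname{Cov}(B(s),B(t))=\min(s,t)-st$; this is a routine but lengthy double-integral computation that I would carry out in closed form only symbolically. Part (ii) proceeds identically: one replaces $\xi_n^Z$, $G^Z$, $qZ$ by $\xi_n^D$, $G^D$, $qD$, uses the analogous integral representation of $\sqrt{n}(\widehat{qDI}^E_n - qDI)$, and arrives at
\[
\sigma^2_D = \int_0^1\!\!\int_0^1 \operatorname{Cov}\bigl(G^D(p),G^D(q)\bigr)\,dp\,dq,
\]
with no new ideas required beyond those used for $\sigma^2_Z$.
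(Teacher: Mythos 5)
Your proposal follows essentially the same route as the paper: write $\sqrt{n}\bigl(\widehat{qZI}^E_n-qZI\bigr)=\int_0^1\xi_n^Z(p)\,dp$, pass to the limit $\int_0^1 G^Z(p)\,dp$ via Theorem \ref{th:BB} and the continuity of integration in the supremum norm, and identify the limit as a centred Gaussian variable whose variance is the double integral of the covariance of $G^Z$. Your formula $\sigma^2_Z=\int_0^1\int_0^1\mathrm{Cov}\bigl(G^Z(p),G^Z(q)\bigr)\,dq\,dp$ is exactly what the paper computes; the only cosmetic difference is that the paper expands the bridge as $B(t)=W(t)-tW(1)$ and works with Wiener covariances, whereas you would use $\mathrm{Cov}(B(s),B(t))=\min(s,t)-st$ directly. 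Where you genuinely depart from the paper is in the treatment of the endpoints, and here you are more careful than the source: Theorem \ref{th:BB} only gives convergence of $\xi_n^Z$ on compact subintervals $[a,b]\subset(0,1)$ (its hypothesis only controls $Q'$ on such intervals), so the continuous mapping argument covers $\int_a^b$ but not the tails $\int_0^a$ and $\int_b^1$. The paper's proof asserts the convergence of the full integral ``following Theorem \ref{th:BB} and the fact that $G^Z(p)$ is bounded and the integral is defined on a closed interval'', which does not by itself control the boundary contributions when $Q'(p)$ blows up as $p\to 0^+$ or $p\to 1^-$ --- precisely the heavy-tailed regime the paper is aimed at. Your proposed fix (uniform-in-$n$ second-moment bounds near the endpoints extracted from the decomposition (\ref{eq:product_2_rv}), possibly under a mild extra regularity condition on $F$ near $0$) is the right shape of argument, but it remains a sketch; making that tightness step rigorous is the one substantive piece of work that neither your proposal nor the paper actually carries out.
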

\begin{proof}
    % We can notice that
    From the definition of $\widehat{qZI}^E_n$ and $qZI$
    \begin{align}
    \sqrt{n}\left[\widehat{qZI}^E_n-qZI\right] =
    \sqrt{n}\int_0^1\left(\widehat{qZ}^E_n(p)-qZ(p)\right)\,dp,
    \end{align}
    which converges in probability to $\int_0^1 G^Z(p)\,dp$, following Theorem \ref{th:BB} and the fact that $G^Z(p)$ is bounded and the integral is defined on a~closed interval.
    Since an integral of a~bounded Gaussian process is a~Gaussian random variable, we only need to compute its mean and variance. Mean is equal to zero, because
    \begin{align}
    \mathbb{E} \int_0^1 G^Z(p)\,dp = \int_0^1 \mathbb{E} G^Z(p)\,dp =0.
    \end{align}
    % since $\mathbb{E}G^Z(p)=0$ as the expected value of a Brownian bridge is zero.

    Denote
    % by $W(t)$ a Wiener process, $\mathcal{G}^Z_t=\int_0^t G^Z(p)\,dp$ and
\begin{align*}
    a(p)=\left[1-qZ(p)\right]\frac{Q^{'}\left(\frac{p}{2}\right)}{Q\left(\frac{p}{2}\right)} \qquad \text{and} \qquad
    b(p)=\left[1-qZ(p)\right]\frac{Q^{'}\left(\frac{1+p}{2}\right)}{Q\left(\frac{1+p}{2}\right)}.
\end{align*}
Then 
\begin{align*}
    G^Z(p)&=
    a(p)B\left(\frac{p}{2}\right)
    b(p)\left(\frac{1+p}{2}\right)\\
    &=a(p)\biggl[W\left(\frac{p}{2}\right)-\frac{p}{2}W(1)\biggr]
    +b(p)\biggl[W\left(\frac{1+p}{2}\right)-\frac{1+p}{2}W(1)\biggr],
        % -b(p)W\left(\frac{1+p}{2}\right)+\left(b(p)\frac{1+p}{2}-a(p)\frac{p}{2}\right)W(1)\biggr]
\end{align*}
where $W(t)$ is a standard Wiener process. From properties of the Wiener process, it can be shown that
% We have then
%     \begin{align*}
% \Var\left(\mathcal{G}^Z_1\right)&=\mathbb{E}\left(\mathcal{G}^Z_1\right)^2=
%         \mathbb{E}\left(\int_0^1 G^Z(p)\,dp\right)^2=
%         \mathbb{E}\left(\int_0^1\int_0^1 G^Z(p)G^Z(q)\,dp\,dq\right)\\
%         &=\mathbb{E}\int_0^1\int_0^1
%         \biggl[a(p)W\left(\frac{p}{2}\right)
%         -b(p)W\left(\frac{1+p}{2}\right)+\left(b(p)\frac{1+p}{2}-a(p)\frac{p}{2}\right)W(1)\biggr]\times \\
%         &\biggl[a(q)W\left(\frac{q}{2}\right)
%         -b(q)W\left(\frac{1+q}{2}\right)+\left(b(q)\frac{1+q}{2}-a(q)\frac{q}{2}\right)W(1)\biggr]\,dq\,dp \\
% &=\mathbb{E}\int_0^1\int_0^1\biggl[
% a(p)a(q)W\left(\frac{p}{2}\right)W\left(\frac{q}{2}\right)+b(p)b(q)W\left(\frac{1+p}{2}\right)W\left(\frac{1+q}{2}\right)\\
% &+\left(W(1)\right)^2\left(b(p)\frac{1+p}{2}-a(p)W\left(\frac{p}{2}\right)\right)
% \left(b(q)\frac{1+q}{2}-a(q)\frac{q}{2}\right)\\
% &-a(p)W\left(\frac{p}{2}\right)b(q)W\left(\frac{1+q}{2}\right)-b(p)a(q)W\left(\frac{1+p}{2}\right)W\left(\frac{q}{2}\right)\\
% &+\left(a(p)W\left(\frac{p}{2}\right)\right)-b(p)W\left(\frac{1+p}{2}\right)\left(b(q)\frac{1+q}{2}-a(q)\frac{q}{2}\right)W(1)\\
% &+\left(a(q)W\left(\frac{q}{2}\right)-b(q)W\left(\frac{1+q}{2}\right)\right)\left(b(p)\frac{1+p}{2}-
% a(p)\frac{p}{2}W(1)\right)
% \biggr]dq\,dp.
%     \end{align*}
% Applying Fubini theorem and the fact that $\mathbb{E}\left[W(p)W(q)\right]=\min(p,q)$, we get
\begin{align*}
\mathbb{E}\left[\int_0^1 G^Z(p)\,dp\right]^2=
&\int_0^1\int_0^1\biggl\{
% a(p)a(q)\min\left(\frac{p}{2},\frac{q}{2}\right)
a(p)a(q)\frac{1}{2}\min\{p,q\}
% +b(p)b(q)\min\left(\frac{1+p}{2},\frac{1+q}{2}\right)\\
+b(p)b(q)\left[\frac{1}{2}+\frac{1}{2}\min\{p,q\}\right]\\
&+\left[b(p)\frac{1+p}{2}-a(p)\frac{p}{2}\right]
\left[b(q)\frac{1+q}{2}-a(q)\frac{q}{2}\right]\\
&-a(p)b(q)\frac{p}{2}-b(p)a(q)\frac{q}{2}\\
&+\left[a(p)\frac{p}{2}-b(p)\frac{1+p}{2}\right]\left[b(q)\frac{1+q}{2}-a(q)\frac{q}{2}\right]\\
&+\left[a(q)\frac{q}{2}-b(q)\frac{1+q}{2}\right]\left[b(p)\frac{1+p}{2}-
a(p)\frac{p}{2}\right]
\biggr\}dq\,dp,
\end{align*}
which reduces to
\begin{align*}
&\int_0^1\int_0^1
\biggl\{
a(p)a(q)\left[\frac{1}{2}\min\{p,q\}-\frac{pq}{4}\right]\\
&+b(p)b(q)\left[\frac{1}{2}+\frac{1}{2}\min\{p,q\}-\frac{\left(1+p\right)\left(1+q\right)}{4}\right]\\
&+b(p)a(q)\frac{q}{2}\frac{p-1}{2}+
a(p)b(q)\frac{p}{2}\frac{q-1}{2}
\biggr\}
dq\,dp=\sigma^2_Z.
\end{align*}
% being the $\sigma^2_Z$.
It can be shown analogously that
\begin{align*}
\sigma^2_D=&\int_0^1\int_0^1
\biggl\{
c(p)c(q)\left[\frac{1}{2}\min\{p,q\}-\frac{pq}{4}\right]\\
&+d(p)d(q)\left[1-\frac{1}{2}\max\{p,q\}-\left(1-\frac{p}{2}\right)\left(1-\frac{q}{2}\right)\right]\\
&-\frac{pq}{4}\bigl[d(p)c(q)+
c(p)d(q)\bigr]
\biggr\}
dq\,dp,
\end{align*}
where
\begin{align*}
    c(p)=\left[1-qD(p)\right]\frac{Q^{'}\left(\frac{p}{2}\right)}{Q\left(\frac{p}{2}\right)} \qquad \text{and} \qquad
    d(p)=\left[1-qD(p)\right]\frac{Q^{'}\left(1-\frac{p}{2}\right)}{Q\left(1-\frac{p}{2}\right)}.
\end{align*}

\end{proof}

The variances $\sigma^2_Z$ and $\sigma^2_D$ are close to 0 if the value of the index is close to 0 or 1. The precise relation between the variance and the value of the index for Dagum distribution $\mathcal{D}(1,a,1)$ with different values of shape parameter $a$ is shown in Figures \ref{fig:qZ_var} and \ref{fig:qD_var} for $qZI$ and $qDI$ respectively. The values of $qZI$ and $qDI$ in these plots depend on $a$ parameter.

%%%%%%%%%%%%%%%%%%%%%%%%%%%%%%%%%%%%%%%%%%%%%%%
%%%%%% TWO FIGURES next to each other %%%%%%%%% %%%%%%%%%%%%%%%%%%%%%%%%%%%%%%%%%%%%%%%%%%%%%%%%%%%%
%\par\noindent
%%%%%%%%%%%%%%%%%%%%%%%%%%%%%%%%%%%%%%%%%%%%%

\begin{minipage}[b]{0.45\textwidth}
\begin{figure}[H]
\includegraphics[angle=0,width=\textwidth]
{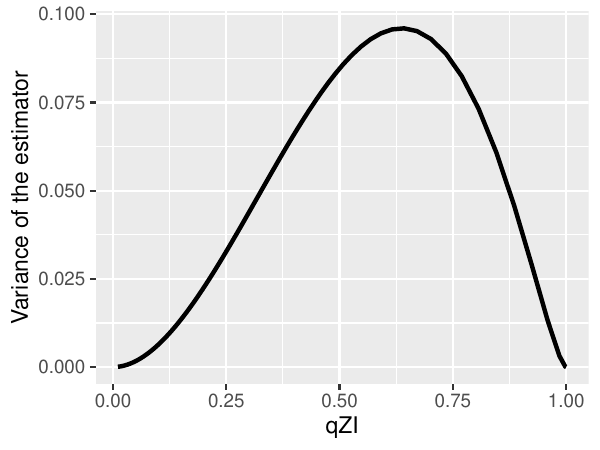}
\caption[$\sigma^2_Z$ of $\widehat{qZI}^E$ estimator for $\mathcal{D}(1,a,1)$]{$\sigma^2_Z$ of $\widehat{qZI}^E$ estimator for $\mathcal{D}(1,a,1)$}
\label{fig:qZ_var}
\end{figure}
\end{minipage}
\hfill
\begin{minipage}[b]{0.45\textwidth}
\begin{figure}[H]
\includegraphics[angle=0,width=\textwidth]
{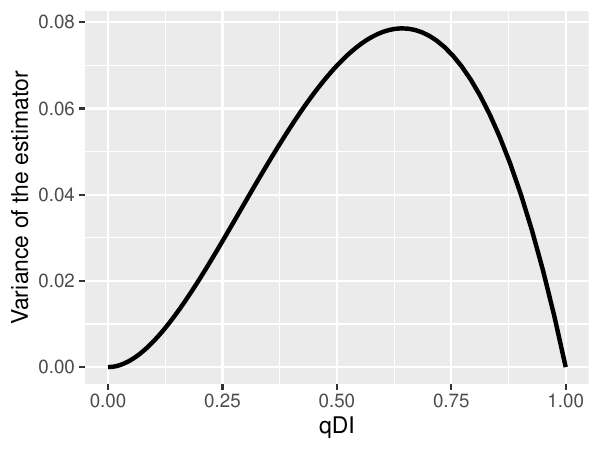}
\caption[$\sigma^2_D$ of $\widehat{qDI}^E$ estimator for $\mathcal{D}(1,a,1)$]{$\sigma^2_D$ of $\widehat{qDI}^E$ estimator for $\mathcal{D}(1,a,1)$}
\label{fig:qD_var}
\end{figure}
\end{minipage}

The asymptotic behaviour of the empirical estimator of the Lorenz curve was covered in the literature (see Section 3.5 in \cite{Goldie1977}). It was studied also for Gini index with some assumptions regarding the distribution of the observations (see for example Section 4~in \cite{Goldie1977}), for Chakravarty's \cite{Chakravarty1988} extended Gini index (see Theorem 3.2 in \cite{Zitikis2003}) and for quintile share ratio (QSR) (see Theorem 2~in \cite{Kpanzou2014}). The last result is more general and can be extended also to concentration measures defined in a~similar manner to QSR, such as decile share ratio (DSR) or Palma ratio \cite{Palma2011}.

\begin{table}[h]
\centering
\caption{Values of the $qZI$ and $qDI$ for Dagum distribution with different values of shape parameters $a$ and $b$}
\label{tab:index_values}
\begin{tabular}{c|cccc|cccc}
\hline
\multicolumn{1}{c|}{ $b$ } & \multicolumn{4}{c|}{$.5$} & \multicolumn{4}{c}{$1$} \\
\hline
$a$  & .5 & .8 & 2 & 4 & .5 & .8 & 2 & 4 \\
\hline
$qZI$  & 0.9985  & 0.9849 &  0.8288 &  0.5973 &  0.9932 &  0.9589 &  0.7344 &  0.4912 \\
% \hline
$qDI$ &  0.9079  & 0.8563  & 0.6877 &  0.5105  & 0.8785 &  0.8127  & 0.6137  & 0.4292 \\
\hline
\end{tabular}
\end{table}

%%%%%%%%%%%%%%%%%%%%%%%%%%%%%%%%%%%%%%%%%%%%%%%
%%%%%% TWO FIGURES next to each other %%%%%%%%% %%%%%%%%%%%%%%%%%%%%%%%%%%%%%%%%%%%%%%%%%%%%%%%%%%%%
%\par\noindent
%%%%%%%%%%%%%%%%%%%%%%%%%%%%%%%%%%%%%%%%%%%%%

\begin{minipage}[b]{0.45\textwidth}
\begin{figure}[H]
\includegraphics[angle=0,width=\textwidth]
{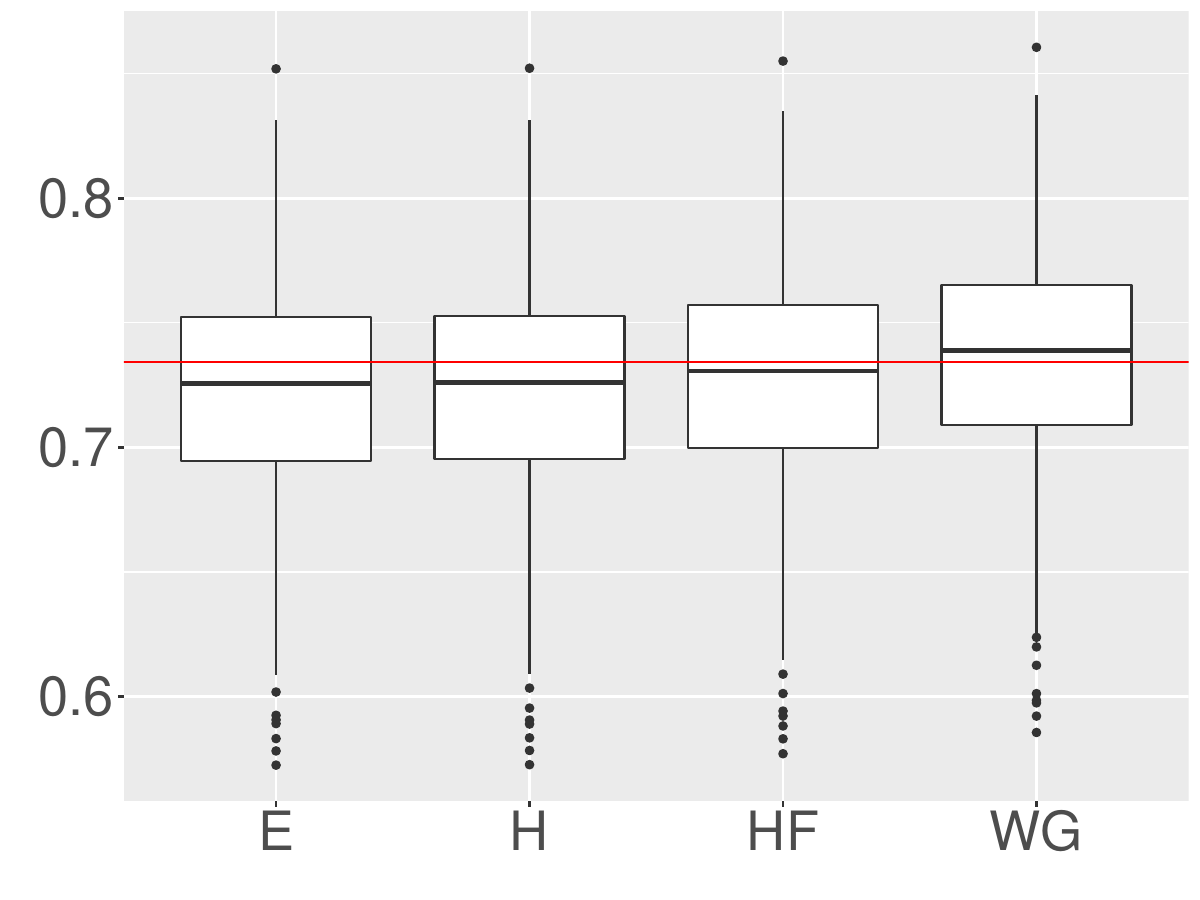}
\caption[Boxplots of estimated values of $qZI$ for the $\mathcal{D}(1,2,2)$ distribution, $n$=50]{Boxplots of estimated values of $qZI$ for the $\mathcal{D}(1,2,2)$ distribution, $n$=50}
\label{fig:qZ_20_50}
\end{figure}
\end{minipage}
\hfill
\begin{minipage}[b]{0.45\textwidth}
\begin{figure}[H]
\includegraphics[angle=0,width=\textwidth]
{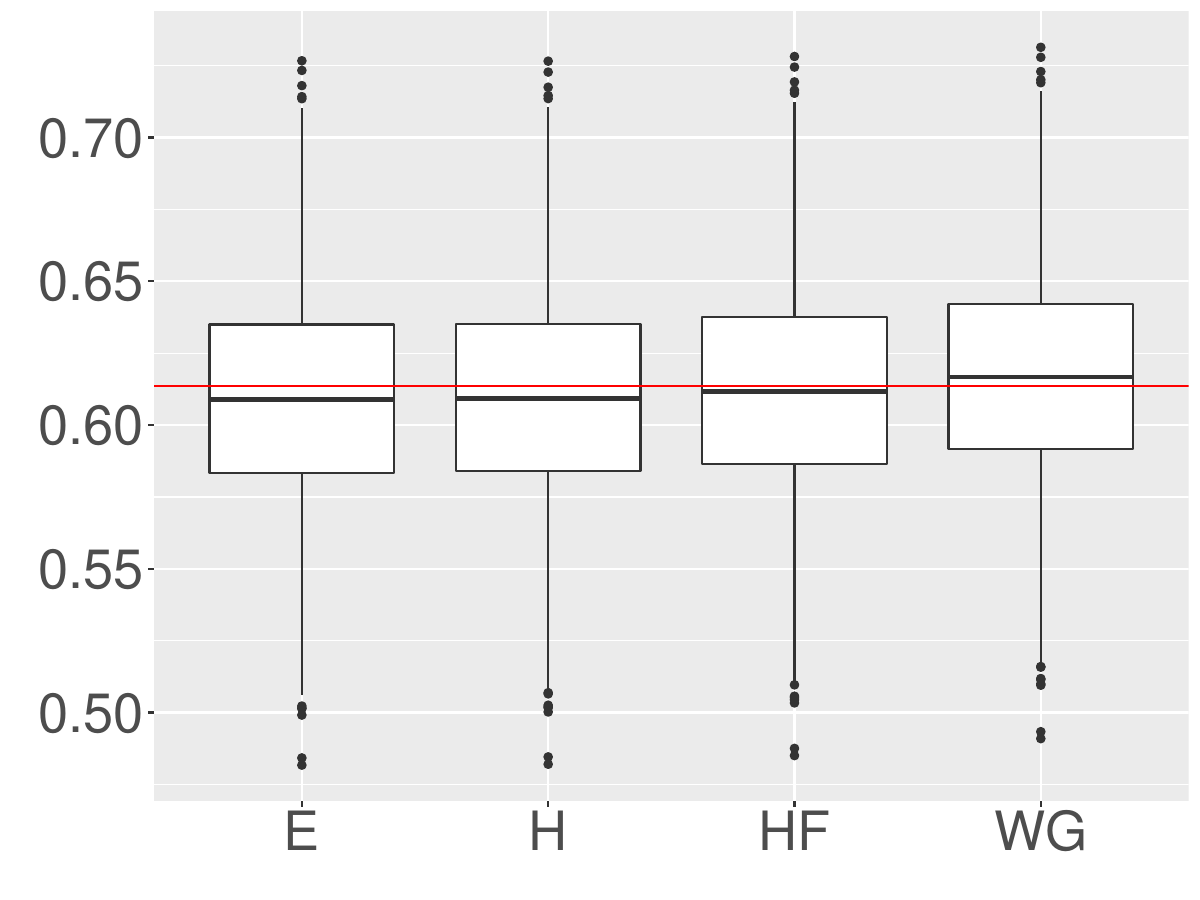}
\caption[Boxplots of estimated values of $qDI$ for the $\mathcal{D}(1,2,2)$ distribution, $n$=50]{Boxplots of estimated values of $qDI$ for the $\mathcal{D}(1,2,2)$ distribution, $n$=50}
\label{fig:qD_20_50}
\end{figure}
\end{minipage}

%%%%%%%%%%%%%%%%%%%%%%%%%%%%%%%%%%%%%%%%%%%%%%%%%%%%%%%%%%%%%%%%%%%%%%%%%%%%%%%
\section{Simulation study}\label{sec:simulation}
%%%%%%%%%%%%%%%%%%%%%%%%%%%%%%%%%%%%%%%%%%%%%%%%%%%%%%%%%%%%%%%%%%%%%%%%%%%%%%%%

To investigate the performance of the estimators of the proposed measures of inequality
$qZI$ and $qDI$ we conducted a~simulation study.
We performed the simulations using the R programming language.
In the simulation study we compared the accuracy of the plug-in estimators $qZI({\widehat Q}_{n})$ and $qDI({\widehat Q}_{n})$ of the indices $qZI$ and $qDI,$
where ${\widehat Q}_{n}$ is one of the following quantile function estimators:
${\widehat Q}_{n}^{E},$ ${\widehat Q}_{n}^{H},$ ${\widehat Q}_{n}^{WG,}$ and ${\widehat Q}_{n}^{HF},$
obtained by applying the R function {\it quantile} from the package {\it stats} with option {\it type} 1, 5, 6 and 8, respectively.

We generated 1000 repetitions of random samples with sample size $n=50$, $n=100$ and $n=500$
from the Dagum distribution $\mathcal{D}(\sigma,a,b),$ with cdf given by (\ref{e:FDagum}), with various shape parameters $a$ $(0.5,1,2,4)$ and $b$ $(0.5, 1)$.

Dagum distribution was considered here, as it is often used to fit the distribution of incomes (see for example \cite{Bandourian2002}).
For each sample, we calculated the estimated values of the indices $qZI$ and $qDI.$
We compared the medians of the obtained estimates of the indices $qZI$ and $qDI$ with the true values of these indices
and the differences between the upper and lower quartiles (for each quantile function estimator considered).
Moreover, a~similar experiment was performed for Pareto distribution with shape parameters $0.7, 1, 2$ and $3$.

We use mean integrated square error (MISE), defined as
\begin{align*}
    \text{MISE}(\widetilde{qZ}_{n})=\mathbb{E}\left[
\int_{0}^{1}\left[\widetilde{qZ}_{n}(p;{\bf X})-qZ(p;Q)\right]^2dp\right]
\end{align*}
to assess the performance of the estimators $\widetilde{qZ}_{n}$ of $qZ$.
MISE$(\widetilde{qD}_{n})$ is defined analogously.

Figures \ref{fig:qZ_20_50} -- \ref{fig:qD_5_500} contain boxplots of the values of the estimators of $qZI$ and $qDI$
 based on samples of size 50, 100 and 500 from the Dagum distribution $\mathcal{D}(1,2,2),$ and $\mathcal{D}(1,0.5,2)$.
 The red lines represent the true values of the indices in each case, which can also be found in Table \ref{tab:index_values}.
 Tables \ref{tab:mise_50}, \ref{tab:mise_100}, and \ref{tab:mise_500} present the values of MISE of the estimators of $qZ$ and $qD$ with different shape parameters and sample size 50, 100, and 500, respectively.

From the simulations, we have the following conclusions:

\begin{itemize}
    \item Among the four simple estimators, WG is the best (in terms of median unbiasedness and MISE) when the inequality index is high, while HF is the best in the opposite case in terms of median unbiasedness and H or HF are the best in terms of MISE.
    % when the shape parameter is small, which is the case of high inequality and infinite expected value, while HF is best in the opposite case.
    % \item In terms of MISE, the H and HF estimators are slightly better for the highest values of the shape parameter included in this experiment.
    \item The differences between these four estimators are tiny for large $n$ (500 or greater), and any of them can be chosen to be used to analyse the inequality of the observations with no relevant discrepancies.
    \item The MSE and MISE is relatively small when the inequality is very high. This can be explained by the character of the relation between the variance of the empirical estimator of $qZI$ and $qDI$ and their true values, which can be seen in Figures \ref{fig:qZ_var} and \ref{fig:qD_var}, respectively.
    \item Analogous conclusions can be drawn from the analysis of simulation study performed for both Dagum and Pareto distributions, thus the results of the experiment performed for the Pareto distribution are not depicted here, to maintain the conciseness of this section.
\end{itemize}

%%%%%%%%%%%%%%%%%%%%%%%%%%%%%%%%%%%%%%%%%%%%%%%
%%%%%% TWO FIGURES next to each other %%%%%%%%% %%%%%%%%%%%%%%%%%%%%%%%%%%%%%%%%%%%%%%%%%%%%%%%%%%%%
%\par\noindent
%%%%%%%%%%%%%%%%%%%%%%%%%%%%%%%%%%%%%%%%%%%%%

\begin{minipage}[b]{0.45\textwidth}
\begin{figure}[H]
\includegraphics[angle=0,width=\textwidth]
{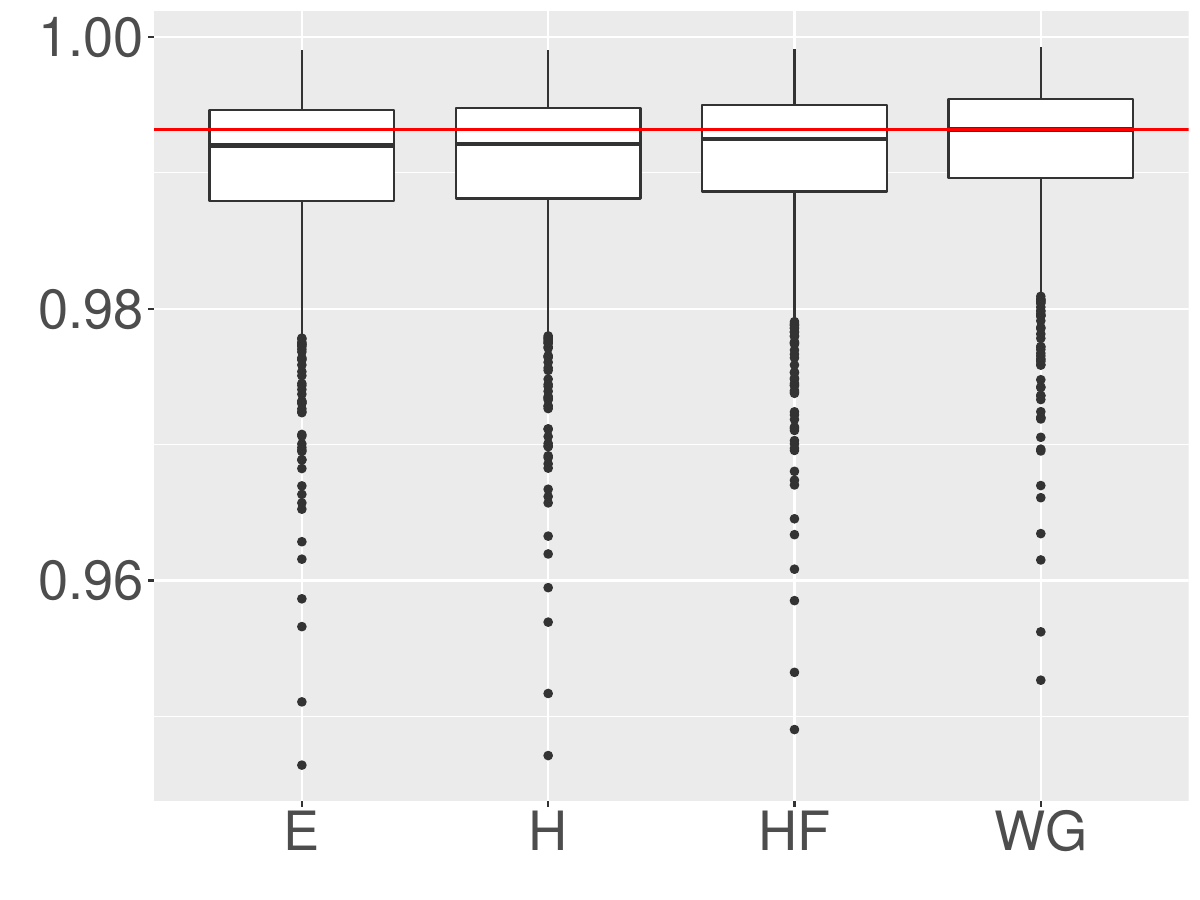}
\caption[Boxplots of estimated values of $qZI$ for the $\mathcal{D}(1,0.5,2)$ distribution, $n$=50]{Boxplots of estimated values of $qZI$ for the $\mathcal{D}(1,0.5,2)$ distribution, $n$=50}
\label{fig:qZ_5_50}
\end{figure}
\end{minipage}
\hfill
\begin{minipage}[b]{0.45\textwidth}
\begin{figure}[H]
\includegraphics[angle=0,width=\textwidth]
{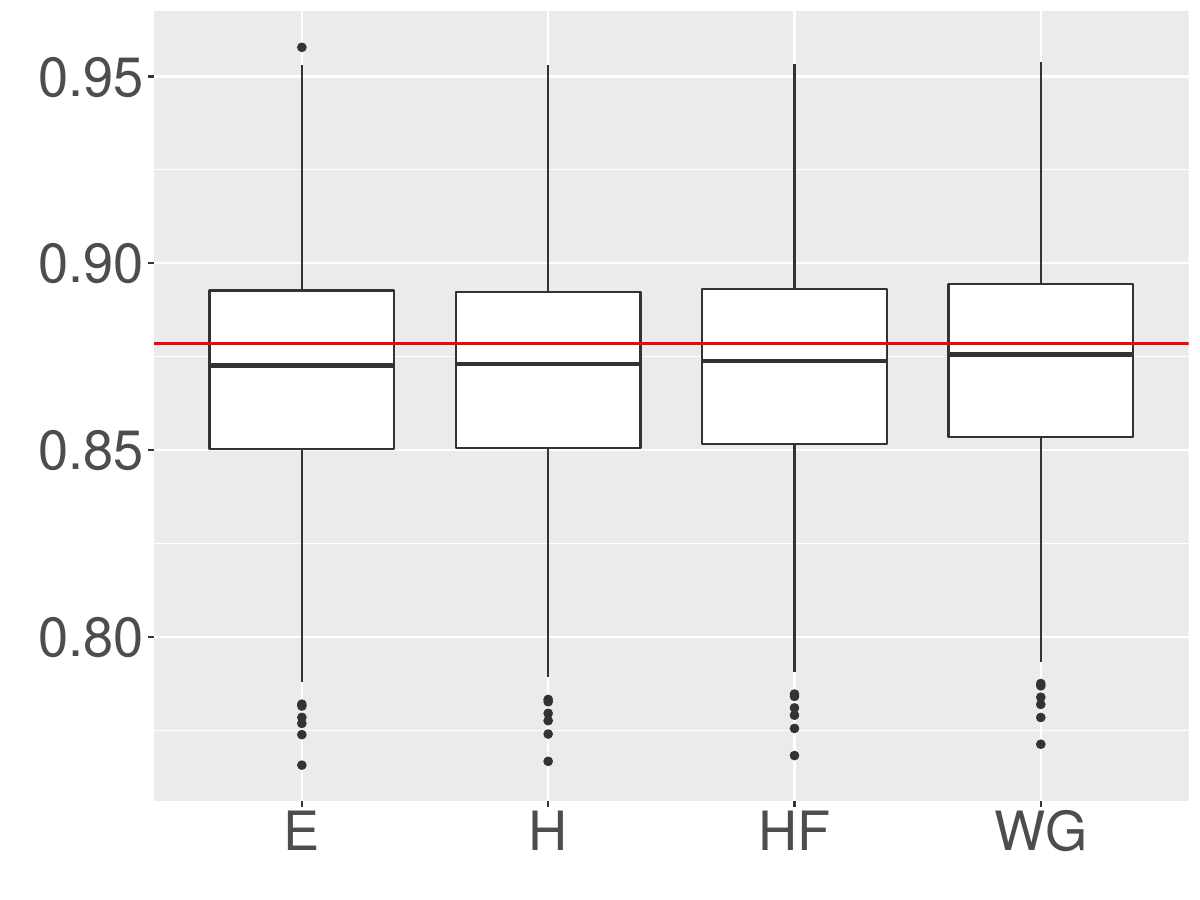}
\caption[Boxplots of estimated values of $qDI$ for the $\mathcal{D}(1,0.5,2)$ distribution, $n$=50]{Boxplots of estimated values of $qDI$ for the $\mathcal{D}(1,0.5,2)$ distribution, $n$=50}
\label{fig:qD_5_50}
\end{figure}
\end{minipage}

%%%%%%%%%%%%%%%%%%%%%%%%%%%%%%%%%%%%%%%%%%%%%%%
%%%%%% TWO FIGURES next to each other %%%%%%%%% %%%%%%%%%%%%%%%%%%%%%%%%%%%%%%%%%%%%%%%%%%%%%%%%%%%%
%\par\noindent
%%%%%%%%%%%%%%%%%%%%%%%%%%%%%%%%%%%%%%%%%%%%%

\begin{minipage}[b]{0.45\textwidth}
\begin{figure}[H]
\includegraphics[angle=0,width=\textwidth]
{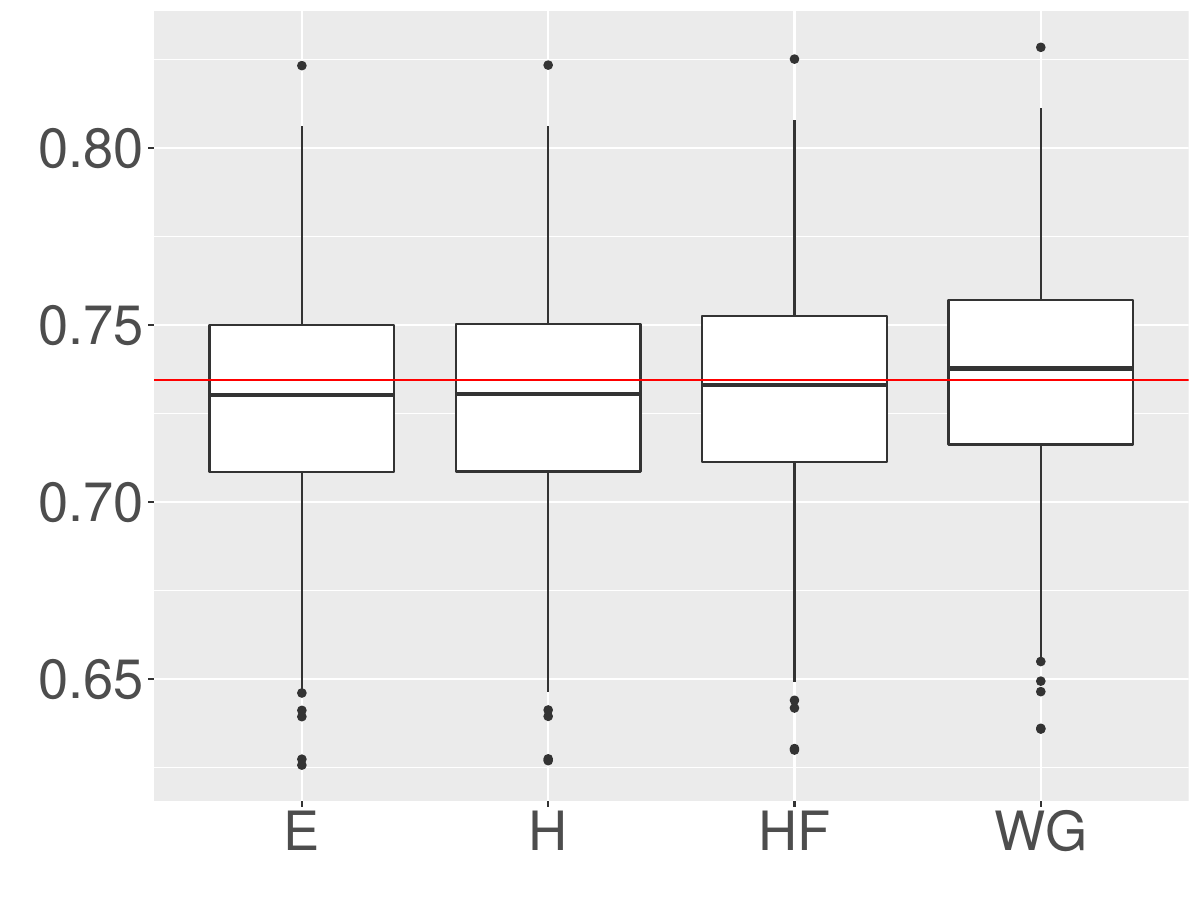}
\caption[Boxplots of estimated values of $qZI$ for the $\mathcal{D}(1,2,2)$ distribution, $n$=100]{Boxplots of estimated values of $qZI$ for the $\mathcal{D}(1,2,2)$ distribution, $n$=100}
\label{fig:qZ_20_100}
\end{figure}
\end{minipage}
\hfill
\begin{minipage}[b]{0.45\textwidth}
\begin{figure}[H]
\includegraphics[angle=0,width=\textwidth]
{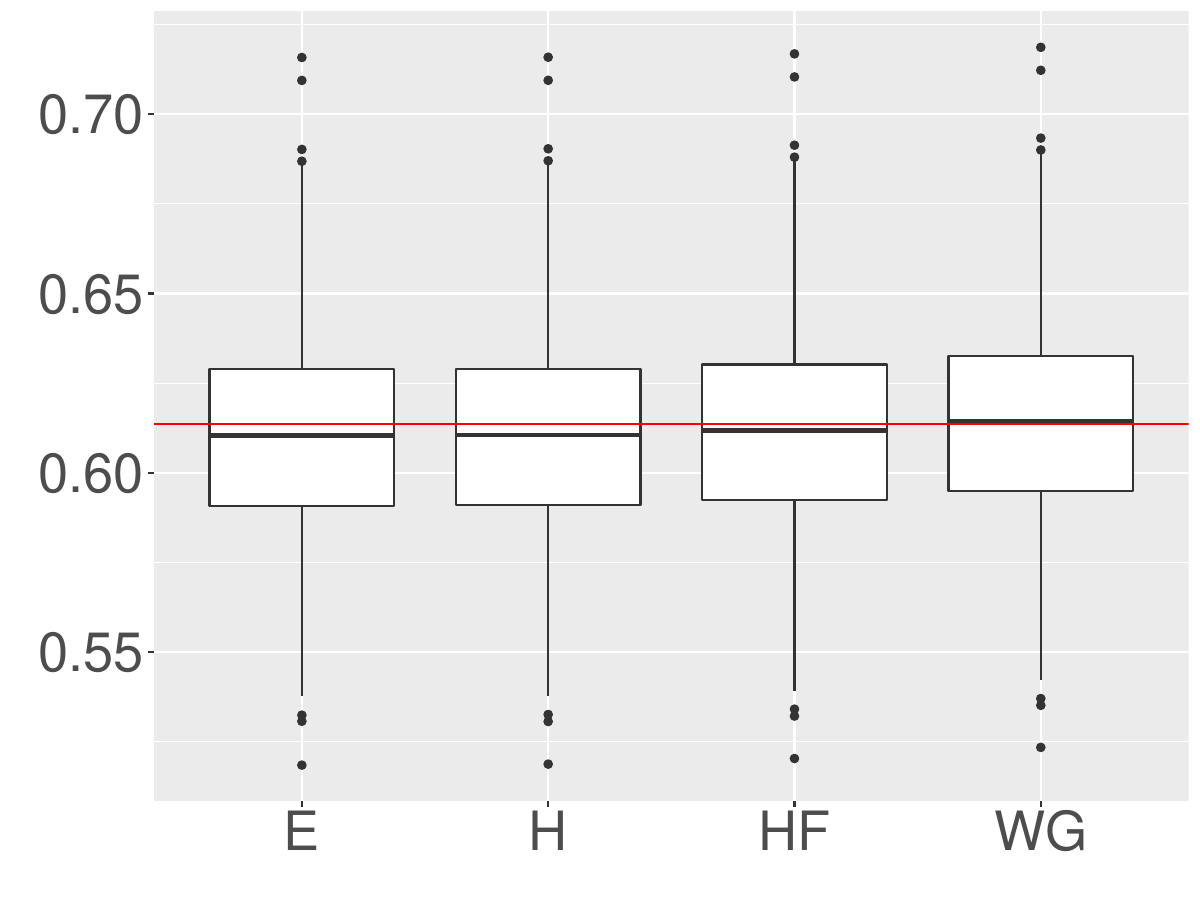}
\caption[Boxplots of estimated values of $qDI$ for the $\mathcal{D}(1,2,2)$ distribution, $n$=100]{Boxplots of estimated values of $qDI$ for the $\mathcal{D}(1,2,2)$ distribution, $n$=100}
\label{fig:qD_20_100}
\end{figure}
\end{minipage}

%%%%%%%%%%%%%%%%%%%%%%%%%%%%%%%%%%%%%%%%%%%%%%%
%%%%%% TWO FIGURES next to each other %%%%%%%%% %%%%%%%%%%%%%%%%%%%%%%%%%%%%%%%%%%%%%%%%%%%%%%%%%%%%
%\par\noindent
%%%%%%%%%%%%%%%%%%%%%%%%%%%%%%%%%%%%%%%%%%%%%

\begin{minipage}[b]{0.45\textwidth}
\begin{figure}[H]
\includegraphics[angle=0,width=\textwidth]
{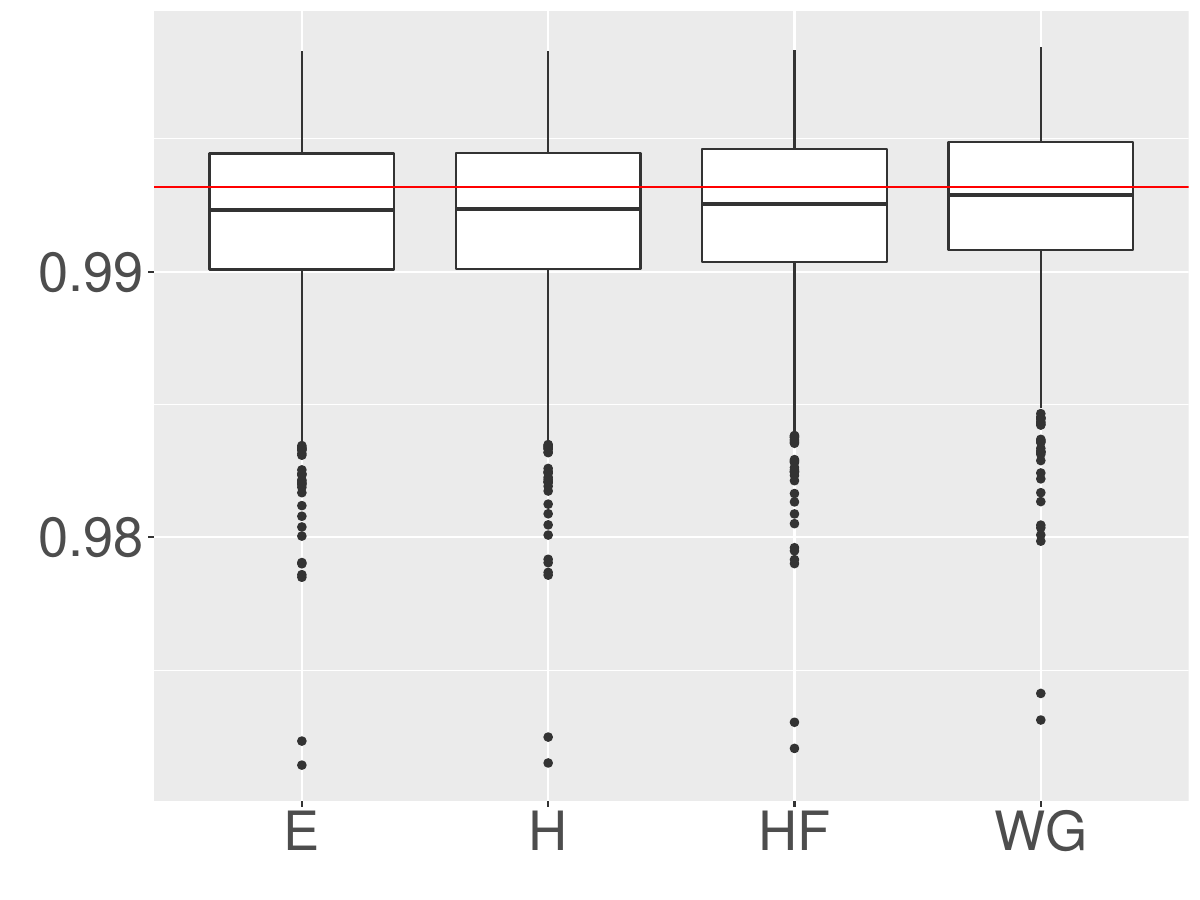}
\caption[Boxplots of estimated values of $qZI$ for the $\mathcal{D}(1,0.5,2)$ distribution, $n$=100]{Boxplots of estimated values of $qZI$ for the $\mathcal{D}(1,0.5,2)$ distribution, $n$=100}
\label{fig:qZ_5_100}
\end{figure}
\end{minipage}
\hfill
\begin{minipage}[b]{0.45\textwidth}
\begin{figure}[H]
\includegraphics[angle=0,width=\textwidth]
{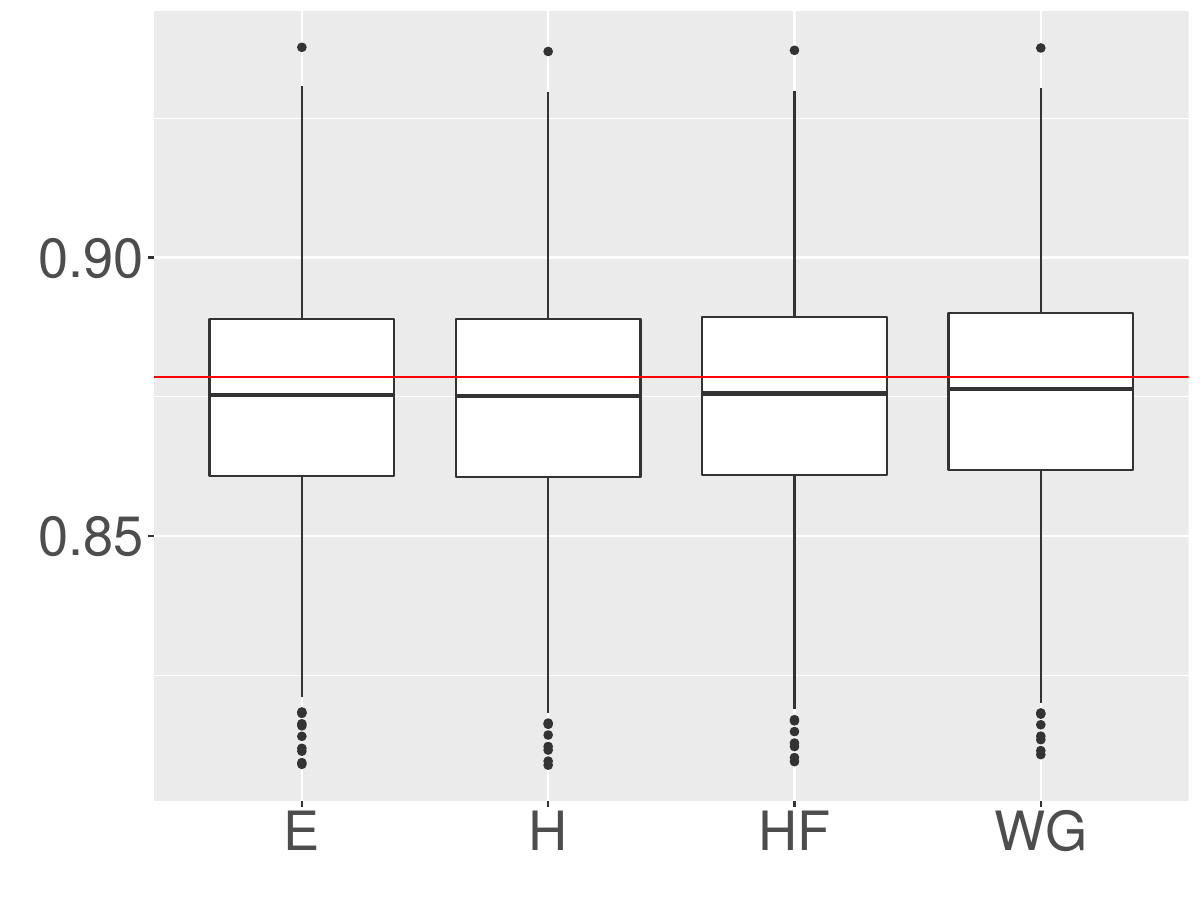}
\caption[Boxplots of estimated values of $qDI$ for the $\mathcal{D}(1,0.5,2)$ distribution, $n$=100]{Boxplots of estimated values of $qDI$ for the $\mathcal{D}(1,0.5,2)$ distribution, $n$=100}
\label{fig:qD_5_100}
\end{figure}
\end{minipage}

%%%%%%%%%%%%%%%%%%%%%%%%%%%%%%%%%%%%%%%%%%%%%%%
%%%%%% TWO FIGURES next to each other %%%%%%%%% %%%%%%%%%%%%%%%%%%%%%%%%%%%%%%%%%%%%%%%%%%%%%%%%%%%%
%\par\noindent
%%%%%%%%%%%%%%%%%%%%%%%%%%%%%%%%%%%%%%%%%%%%%

\begin{minipage}[b]{0.45\textwidth}
\begin{figure}[H]
\includegraphics[angle=0,width=\textwidth]
{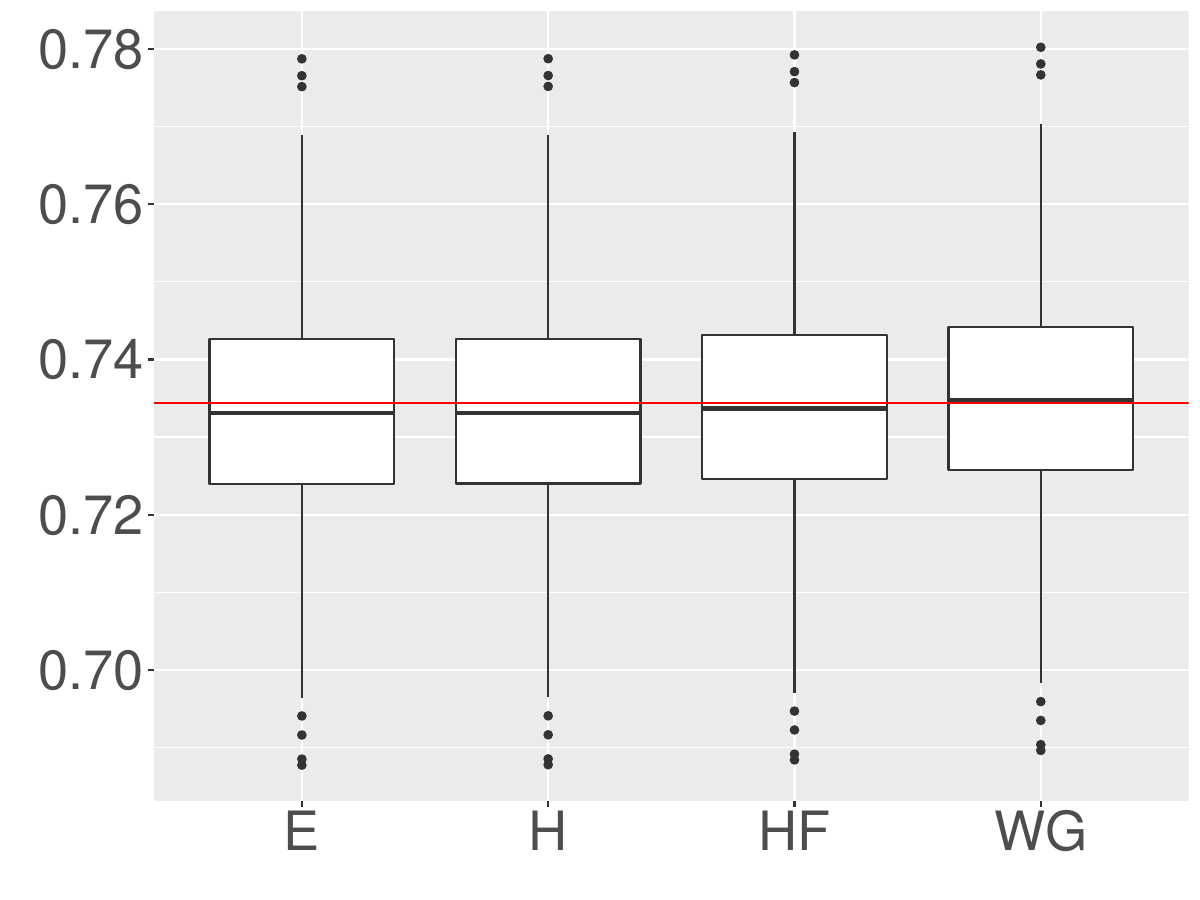}
\caption[Boxplots of estimated values of $qZI$ for the $\mathcal{D}(1,2,2)$ distribution, $n$=500]{Boxplots of estimated values of $qZI$ for the $\mathcal{D}(1,2,2)$ distribution, $n$=500}
\label{fig:qZ_20_500}
\end{figure}
\end{minipage}
\hfill
\begin{minipage}[b]{0.45\textwidth}
\begin{figure}[H]
\includegraphics[angle=0,width=\textwidth]
{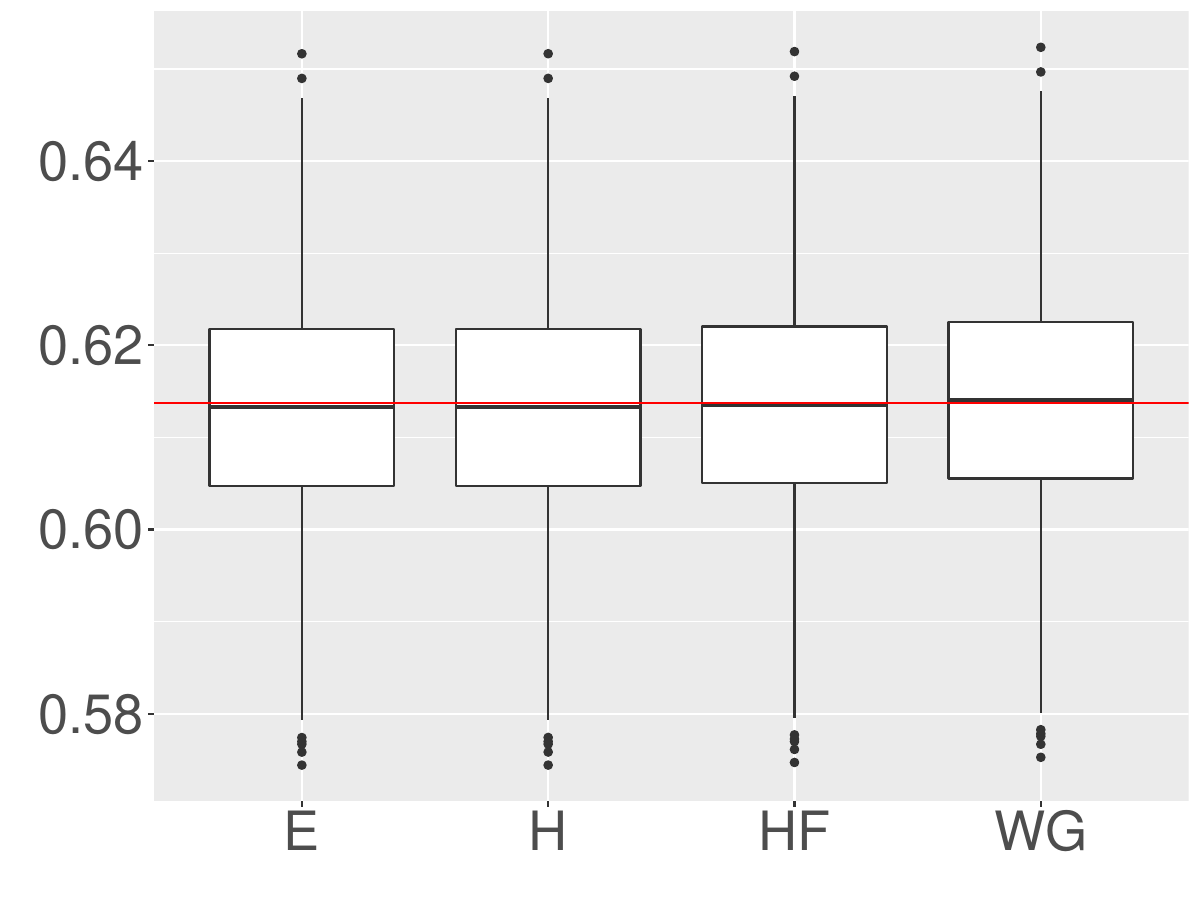}
\caption[Boxplots of estimated values of $qDI$ for the $\mathcal{D}(1,2,2)$ distribution, $n$=500]{Boxplots of estimated values of $qDI$ for the $\mathcal{D}(1,2,2)$ distribution, $n$=500}
\label{fig:qD_20_500}
\end{figure}
\end{minipage}

%%%%%%%%%%%%%%%%%%%%%%%%%%%%%%%%%%%%%%%%%%%%%%%
%%%%%% TWO FIGURES next to each other %%%%%%%%% %%%%%%%%%%%%%%%%%%%%%%%%%%%%%%%%%%%%%%%%%%%%%%%%%%%%
%\par\noindent
%%%%%%%%%%%%%%%%%%%%%%%%%%%%%%%%%%%%%%%%%%%%%

\begin{minipage}[b]{0.45\textwidth}
\begin{figure}[H]
\includegraphics[angle=0,width=\textwidth]
{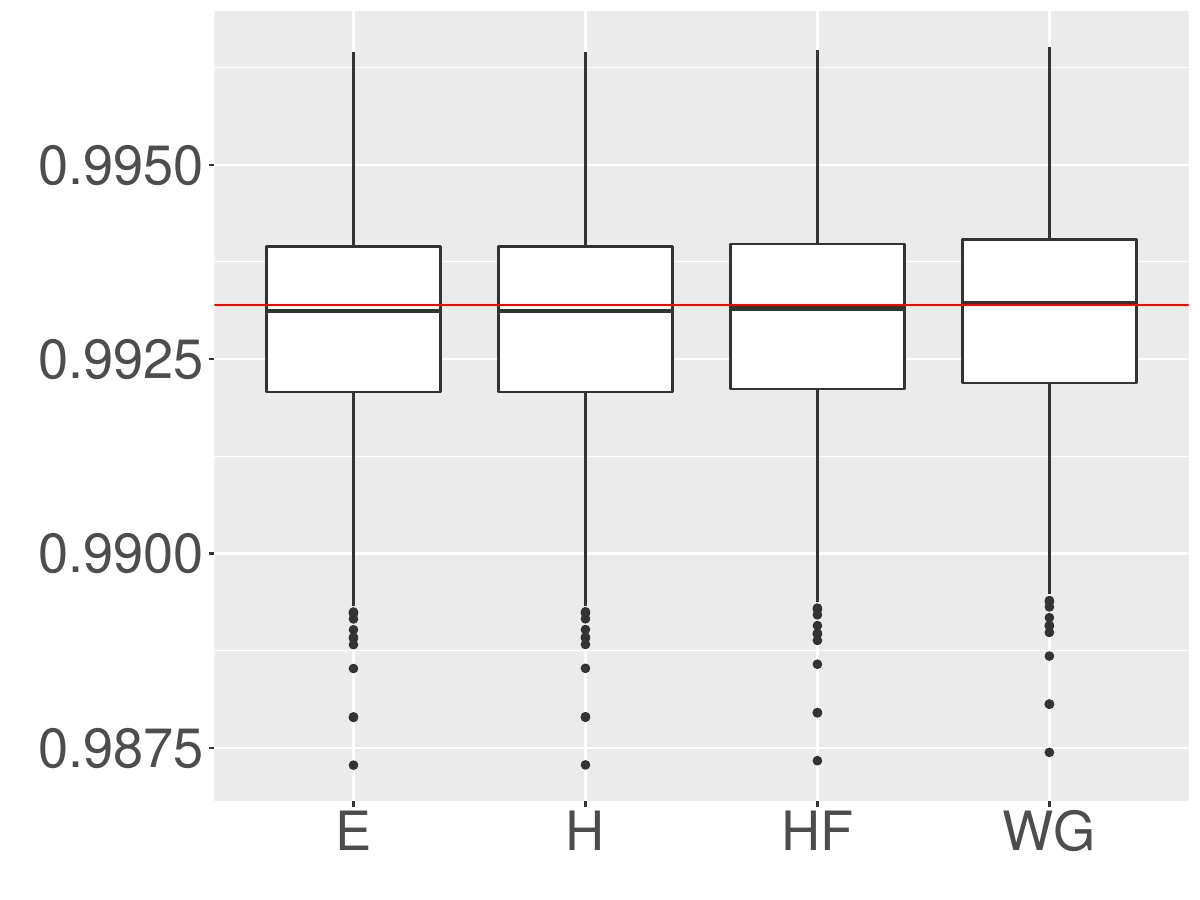}
\caption[Boxplots of estimated values of $qZI$ for the $\mathcal{D}(1,0.5,2)$ distribution, $n$=500]{Boxplots of estimated values of $qZI$ for the $\mathcal{D}(1,0.5,2)$ distribution, $n$=500}
\label{fig:qZ_5_500}
\end{figure}
\end{minipage}
\hfill
\begin{minipage}[b]{0.45\textwidth}
\begin{figure}[H]
\includegraphics[angle=0,width=\textwidth]
{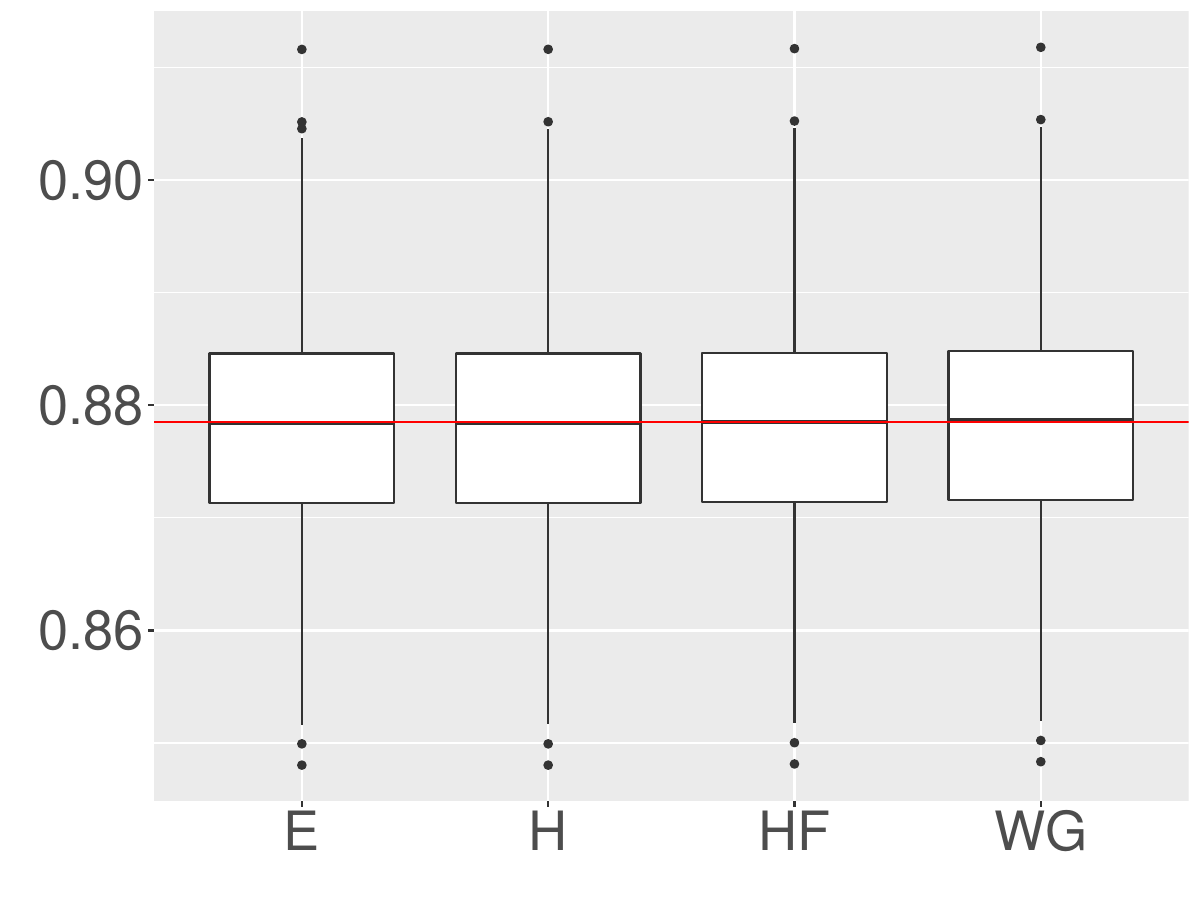}
\caption[Boxplots of estimated values of $qDI$ for the $\mathcal{D}(1,0.5,2)$ distribution, $n$=500]{Boxplots of estimated values of $qDI$ for the $\mathcal{D}(1,0.5,2)$ distribution, $n$=500}
\label{fig:qD_5_500}
\end{figure}
\end{minipage}

\begin{table}[h]
\centering
\caption{MISE (multiplied by 1 000) of the estimators of $qZ$ and $qD$ for sample size $50$ and various shape parameters $b$ and $a$}
\label{tab:mise_50}
\begin{tabular}{c|c|cccc|cccc}
\hline
\multicolumn{2}{c|}{ } & \multicolumn{4}{c|}{$qZ$} & \multicolumn{4}{c}{$qD$} \\
\hline
$b$ & $a$ & E & H & WG & HF & E & H & WG & HF \\
\hline
\multirow{4}{*}{.5} & .5 & 0.0137 & 0.0126 & \textbf{0.0092} & 0.0113 & 4.7540 & 3.5226 & \textbf{3.3925} & 3.4768 \\
 & .8 & 0.2874 & 0.2734 & \textbf{0.2181} & 0.2526 & 5.0982 & 4.2547 & \textbf{4.1104} & 4.2021 \\
 & 2 & 3.4037 & 3.2607 & \textbf{3.0349} & 3.1544 & 3.8707 & 3.4990 & \textbf{3.3915} & 3.4526 \\
 & 4 & 5.0844 & 4.8493 & 5.0229 & \textbf{4.8468} & 3.0898 & 2.8419 & 2.8490 & \textbf{2.8212} \\
\hline
\multirow{4}{*}{1} & .5 & 0.0995 & 0.0935 & \textbf{0.0722} & 0.0855 & 4.9898 & 4.0245 & \textbf{3.8825} & 3.9741 \\
 & .8 & 0.7945 & 0.7554 & \textbf{0.6289} & 0.7069 & 4.4662 & 3.8259 & \textbf{3.6944} & 3.7775 \\
 & 2 & 4.3915 & 4.2032 & \textbf{4.0838} & 4.1153 & 3.3947 & 3.0950 & \textbf{3.0394} & 3.0624 \\
 & 4 & 4.7102 & \textbf{4.4960} & 4.8447 & 4.5544 & 2.5593 & 2.3536 & 2.4342 & \textbf{2.3511} \\
\hline
\end{tabular}
\end{table}

\begin{table}[h]
\centering
\caption{MISE (multiplied by 1 000) of the estimators of $qZ$ and $qD$ for sample size $100$ and various shape parameters $b$ and $a$}
\label{tab:mise_100}
\begin{tabular}{c|c|cccc|cccc}
\hline
\multicolumn{2}{c|}{ } & \multicolumn{4}{c|}{$qZ$} & \multicolumn{4}{c}{$qD$} \\
\hline
$b$ & $a$ & E & H & WG & HF & E & H & WG & HF \\
\hline
\multirow{4}{*}{.5} & .5 & 0.0037 & 0.0036 & \textbf{0.0030} & 0.0034 & 2.3810 & 2.0448 & \textbf{2.0086} & 2.0325  \\
 & .8 & 0.1075 & 0.1053 & \textbf{0.0929} & 0.1008 & 2.3604 & 2.1467 & \textbf{2.1106} & 2.1339 \\
 & 2 & 1.6423 & 1.5999 & \textbf{1.5340} & 1.5685 & 1.8998 & 1.8050 & \textbf{1.7809} & 1.7940 \\
 & 4 & 2.6824 & 2.5965 & 2.6616 & \textbf{2.5963} & 1.5709 & 1.5071 & 1.5029 & \textbf{1.4987} \\
\hline
\multirow{4}{*}{1} & .5 & 0.0311 & 0.0304 & \textbf{0.0261} & 0.0288 & 2.3543 & 2.1036 & \textbf{2.0635} & 2.0897 \\
 & .8 & 0.3923 & 0.3838 & \textbf{0.3470} & 0.3702 & 2.2467 & 2.0830 & \textbf{2.0434} & 2.0693 \\
 & 2 & 2.2329 & 2.1674 & \textbf{2.1365} & 2.1418 & 1.7845 & 1.7078 & \textbf{1.6818} & 1.6952 \\
 & 4 & 2.3518 & \textbf{2.2630} & 2.3949 & 2.2827 & 1.2421 & 1.1834 & 1.1979 & \textbf{1.1789} \\
\hline
\end{tabular}
\end{table}

\begin{table}[h]
\centering
\caption{MISE (multiplied by 1 000) of the estimators of $qZ$ and $qD$ for sample size $500$ and various shape parameters $b$ and $a$}
\label{tab:mise_500}
\begin{tabular}{c|c|cccc|cccc}
\hline
\multicolumn{2}{c|}{ } & \multicolumn{4}{c|}{$qZ$} & \multicolumn{4}{c}{$qD$} \\
\hline
$b$ & $a$ & E & H & WG & HF & E & H & WG & HF \\
\hline
\multirow{4}{*}{.5} & .5 & \textbf{0.0005} & \textbf{0.0005} & \textbf{0.0005} & \textbf{0.0005} & 0.4748 & 0.4615 & \textbf{0.4607} & 0.4616 \\
 & .8  & 0.0159 & 0.0159 & \textbf{0.0154} & 0.0157 & 0.4259 & 0.4180 & \textbf{0.4170} & 0.4177 \\
 & 2 & 0.3077 & 0.3055 & \textbf{0.3039} & 0.3044 & 0.3617 & 0.3580 & \textbf{0.3576} & 0.3577 \\
 & 4 &  0.5376 & \textbf{0.5311} & 0.5347 & 0.5314 & 0.3090 & 0.3067 & \textbf{0.3044} & 0.3057 \\
\hline
\multirow{4}{*}{1} & .5 & 0.0045 & 0.0045 & \textbf{0.0043} & 0.0044 & 0.4318 & 0.4227 & \textbf{0.4213} & 0.4219 \\
 & .8 & 0.0634 & 0.0631 & \textbf{0.0613} & 0.0625 & 0.4228 & 0.4178 & \textbf{0.4160} & 0.4171 \\
 & 2 & 0.4289 & 0.4251 & \textbf{0.4232} & 0.4235 & 0.3353 & 0.3327 & \textbf{0.3317} & 0.3322 \\
 & 4 & 0.4934 & \textbf{0.4865} & 0.5046 & 0.4898 & 0.2492 & \textbf{0.2467} & 0.2483 & 0.2468 \\
\hline
\end{tabular}
\end{table}

According to the results of our simulations, we recommend using in practice estimators based on the quantile function estimator $\widehat{Q}_{n}^{WG}$ when the value of  the $\widehat{qZI}$
(or $\widehat{qDI}$) is high and  $\widehat{Q}_{n}^{HF}$ when it is small (0.6 and less).

\begin{remark}
    The computation of $E,~H$, $WG$ and $HF$ estimators of $qZI$ and $qDI$ is significantly faster when using the closed-form expression, than using \textit{integrate} function in R. For sample size $n=1,000$ it is approximately 100 times and for $n=10,000$ approximately 10 times faster.
\end{remark}

%%%%%%%%%%%%%%%%%%%%%%%%%%%%%%%%%%%%%%%%%%%%%%%%%%%%%%%%%%%%%%%%%%%%%%%%%%
\section{Real data analysis}\label{sec:real}
%%%%%%%%%%%%%%%%%%%%%%%%%%%%%%%%%%%%%%%%%%%%%%%%%%%%%%%%%%%%%%%%%%%%%%%%%%
Two examples of data with incomes are analysed in this section. In the first case we compare the level of inequality between three groups of employees divided according to their seniority, while in the second one the inequality of lower bands and upper bands of salaries from job offers is compared.
\subsection{Academic salaries}
As an example, we consider the data set {\it Salaries} from {\it carData} package~in~R.~The set contains, among others,
the 2008-09 nine-month academic salary for Assistant Professors ($n=68$), Associate Professors ($n=64$) and Professors ($n=266$) in a~college in the US.
Table \ref{tab:salary} contains the values of estimators $\widehat{qZI}_{n}^{HF}$ and $\widehat{qDI}_{n}^{HF}$ of the indices $qZI$ and $qDI$
in the subgroups (Assistant Professors, Associate Professors and Professors) and in the entire study group.
The corresponding estimators $\widehat{qZ}_{n}^{HF}$ and $\widehat{qD}_{n}^{HF}$ of the concentration curves drawn for each group are depicted in figures \ref{fig:qZ_academic_salaries} and \ref{fig:qD_academic_salaries}, respectively.

We used estimators based on $\widehat{Q}_n^{HF}$ because according to the results of our simulations we recommend them when the income distribution does not have outliers or heavy tails.

\begin{table}[H]
	\caption{Estimates of $qZI$ and $qDI$ for salary distribution}
	\label{tab:salary}
	\centering
\begin{tabular}[H]{c|c|c|c|c}
 & Prof & AssocProf & AsstProf & All \\ \hline
 $qZI$ & 0.2973 & 0.2225 & 0.1580 & 0.3453 \\ \hline
 $qDI$ & 0.2774 & 0.2138 & 0.1535 & 0.3185
\end{tabular}
\end{table}

%%%%%%%%%%%%%%%%%%%%%%%%%%%%%%%%%%%%%%%%%%%%%%%
%%%%%% TWO FIGURES next to each other %%%%%%%%% %%%%%%%%%%%%%%%%%%%%%%%%%%%%%%%%%%%%%%%%%%%%%%%%%%%%
%\par\noindent
%%%%%%%%%%%%%%%%%%%%%%%%%%%%%%%%%%%%%%%%%%%%%

\begin{minipage}[b]{0.45\textwidth}
\begin{figure}[H]
\includegraphics[angle=0,width=\textwidth]
{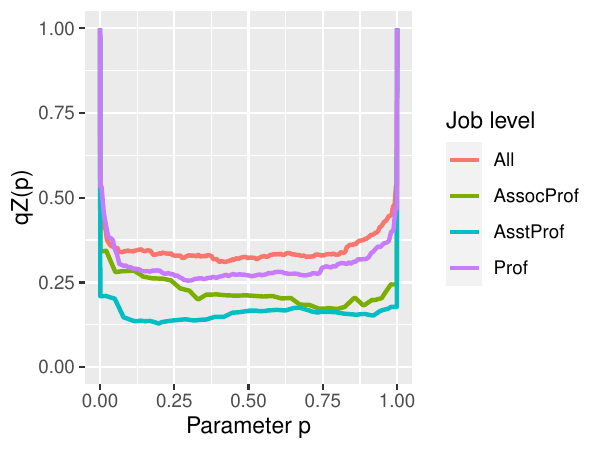}
\caption[HF Estimators of $qZ$ curves for salaries of academics]{HF Estimators of $qZ$ curves for salaries of academics}
\label{fig:qZ_academic_salaries}
\end{figure}
\end{minipage}
\hfill
\begin{minipage}[b]{0.45\textwidth}
\begin{figure}[H]
\includegraphics[angle=0,width=\textwidth]
{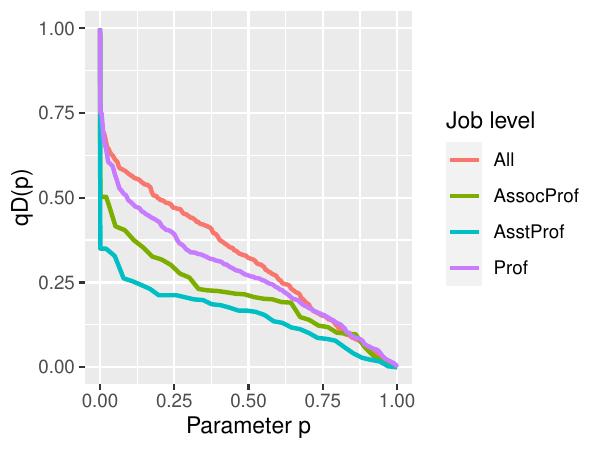}
\caption[HF Estimators of $qD$ curves for salaries of academics]{HF Estimators of $qD$ curves for salaries of academics}
\label{fig:qD_academic_salaries}
\end{figure}
\end{minipage}

As expected, the highest estimated values of indices $qZI,$ $qDI$ are in the entire study group,
and the lowest -- in the subgroup of Assistant Professors.

\subsection{Job offers for data scientists}
A~data set with job offers for data scientists was prepared by scraping data from a~website with job offers \cite{JobOffers} in the US. It consists of 742 records with 42 variables, including the lower and upper bands of the yearly salary (in USD) announced in each offer. We can measure the inequality of the distribution of the lower and upper bands and check which of them has higher inequality. Similarly as in the previous case, $\widehat{qZI}_{n}^{HF}$ and $\widehat{qDI}_{n}^{HF}$ are used to estimate the values of the indices $qZI$ and $qDI$. As we can see in Table \ref{tab:bands}, the inequality is higher among the lower bands than among the upper bands. The corresponding estimators $\widehat{qZ}_{n}^{HF}$ and $\widehat{qD}_{n}^{HF}$ of the concentration curves are depicted in Figures \ref{fig:qZ_job_offers} and \ref{fig:qD_job_offers}, respectively.

%%%%%%%%%%%%%%%%%%%%%%%%%%%%%%%%%%%%%%%%%%%%%%%
%%%%%% TWO FIGURES next to each other %%%%%%%%% %%%%%%%%%%%%%%%%%%%%%%%%%%%%%%%%%%%%%%%%%%%%%%%%%%%%
%\par\noindent
%%%%%%%%%%%%%%%%%%%%%%%%%%%%%%%%%%%%%%%%%%%%%

\begin{minipage}[b]{0.45\textwidth}
\begin{figure}[H]
\includegraphics[angle=0,width=\textwidth]
{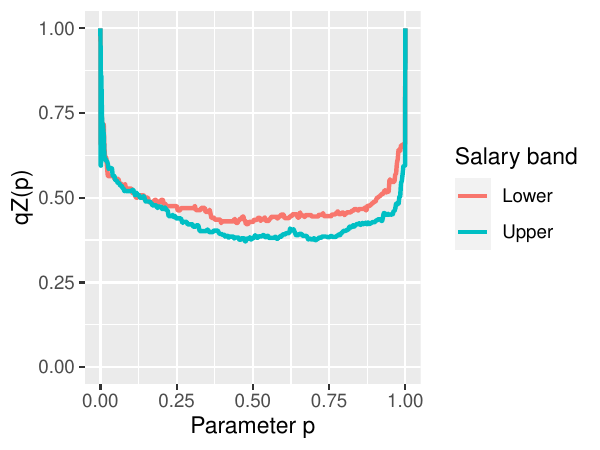}
\caption[HF Estimators of $qZ$ curves for lower and upper bands of salary offers]{HF Estimators of $qZ$ curves for lower and upper bands of salary offers}
\label{fig:qZ_job_offers}
\end{figure}
\end{minipage}
\hfill
\begin{minipage}[b]{0.45\textwidth}
\begin{figure}[H]
\includegraphics[angle=0,width=\textwidth]
{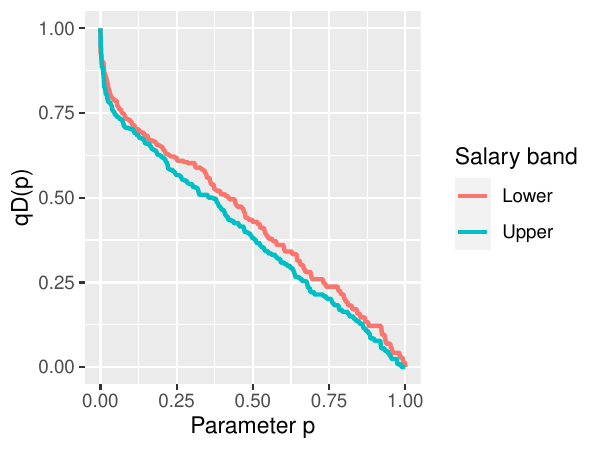}
\caption[HF Estimators of $qD$ curves for lower and upper bands of salary offers]{HF Estimators of $qD$ curves for lower and upper bands of salary offers}
\label{fig:qD_job_offers}
\end{figure}
\end{minipage}

\begin{table}[H]
	\caption{Estimates of $qZI$ and $qDI$ for lower and upper bands of salary in job offers}
	\label{tab:bands}
	\centering
\begin{tabular}[H]{c|c|c}
 & Lower band & Upper band  \\ \hline
 $qZI$ & 0.4793 & 0.4351  \\ \hline
 $qDI$ & 0.4295 & 0.3891
\end{tabular}
\end{table}

%%%%%%%%%%%%%%%%%%%%%%%%%%%%%%%%%%%%%%%%%%%%%%%%%%%%%%%%%%%%%%%%%%%%%%%%%
\section{Concluding remarks and some prospects}\label{sec:concluding}
%%%%%%%%%%%%%%%%%%%%%%%%%%%%%%%%%%%%%%%%%%%%%%%%%%%%%%%%%%%%%%%%%%%%%%%%%
We described the quantile versions of known inequality curves and inequality measures.
We have especially considered the quantile versions of the Zenga-07 and $D$ curves and indices.
We also proposed the nonparametric estimators of the curves and indices.
% Our motivation to consider quantile versions of the Zenga-07 and $D$ curves and inequality measures
% was that they are defined only for distributions with finite expected value.
The fact that the Zenga-07 and $D$ curves and inequality measures are defined only for distributions with finite expected value was our motivation to consider their quantile versions.
An additional argument for using the quantile versions of the inequality curves and inequality measures, even in cases of distributions with finite expected value,
can be the ``robustness'' of their estimators, understood as resistance to outliers.

Investigating properties, applications, and estimators of other inequality measures appearing in the literature might be a~good topic for future research.
Another interesting issue might be the parametric estimation of the aforementioned curves and measures.
We are going to study these issues in future work.

\bibliographystyle{acm}
% \bibliography{qv}

%%%%%%%%%%%%%%%%%%%%%%%%%%%%%%%%%%%%%%%%%%%%%%%%%%%%%%%%%%%%%%%%%%%%%%%%
\end{document}